\numberwithin{equation}{section}
\numberwithin{figure}{section}
\theoremstyle{plain}
\newtheorem{thm}{\protect\theoremname}[section]
  \theoremstyle{plain}
  \newtheorem*{question*}{\protect\questionname}
  \theoremstyle{definition}
  \newtheorem{defn}[thm]{\protect\definitionname}
  \theoremstyle{plain}
  \newtheorem{lem}[thm]{\protect\lemmaname}
  \theoremstyle{remark}
  \newtheorem{rem}[thm]{\protect\remarkname}
  \theoremstyle{plain}
  \newtheorem{prop}[thm]{\protect\propositionname}
  \theoremstyle{plain}
  \newtheorem{cor}[thm]{\protect\corollaryname}
  \theoremstyle{definition}
  \newtheorem{example}[thm]{\protect\examplename}
  \theoremstyle{remark}
  \newtheorem*{acknowledgement*}{\protect\acknowledgementname}
\theoremstyle{plain}
\newtheorem*{thmaux}{Theorem \theoremauxnum}
\newtheorem*{coraux}{Corollary \theoremauxnum}
\newtheorem*{propaux}{Proposition \theoremauxnum}
\gdef\theoremauxnum{1}
\newenvironment{thmff}[2]{%
  \def\theoremauxnum{\ref{#2}}
  \begin{thmaux}[#1]
}{%
  \end{thmaux}
}
\newenvironment{corff}[2]{%
  \def\theoremauxnum{\ref{#2}}
  \begin{coraux}[#1]
}{%
  \end{coraux}
}
  \providecommand{\acknowledgementname}{Acknowledgement}
  \providecommand{\corollaryname}{Corollary}
  \providecommand{\definitionname}{Definition}
  \providecommand{\examplename}{Example}
  \providecommand{\lemmaname}{Lemma}
  \providecommand{\propositionname}{Proposition}
  \providecommand{\questionname}{Question}
  \providecommand{\remarkname}{Remark}
\providecommand{\theoremname}{Theorem}
\newcommand{\ams}{ams}
\newcommand{\sep}{, }
\begin{document}
\global\long\def\norm#1{\left\Vert #1\right\Vert }

\global\long\def\abs#1{\left|#1\right|}

\global\long\def\set#1#2{\left\{  \vphantom{#1}\vphantom{#2}#1\right.\left|\ #2\vphantom{#1}\vphantom{#2}\right\}  }

\global\long\def\sphere#1{\mathbf{S}_{#1}}

\global\long\def\closedball#1{\mathbf{B}_{#1}}

\global\long\def\openball#1{\mathbb{B}_{#1}}

\global\long\def\duality#1#2{\left\langle \vphantom{#1}\vphantom{#2}#1\right.\left|\ #2\vphantom{#1}\vphantom{#2}\right\rangle }

\global\long\def\parenth#1{\left(#1\right)}

\global\long\def\curly#1{\left\{  #1\right\}  }

\global\long\def\blockbraces#1{\left[#1\right] }

\global\long\def\span{\textup{span}}

\global\long\def\image{\textup{Im}}

\textbf{}\global\long\def\support{\textup{supp}}

\global\long\def\N{\mathbb{N}}

\global\long\def\R{\mathbb{R}}

\global\long\def\Q{\mathbb{Q}}

\global\long\def\Rnonneg{\mathbb{R}_{\geq0}}

\global\long\def\C{\mathbb{C}}

\global\long\def\tocorr{\mathbb{\ \twoheadrightarrow\ }}

\global\long\def\restrict#1#2{#1|_{#2}}

\global\long\def\cal#1{\mathcal{#1}}

\global\long\def\ellinfty#1#2{\ell^{\infty}(#1,#2)}

\global\long\def\ellone#1#2{\ell^{1}(#1,#2)}

\global\long\def\c#1#2{\mathbf{c}(#1,#2)}

\global\long\def\czero#1#2{\mathbf{c}_{0}(#1,#2)}

\global\long\def\ellinftyomega#1{\ellinfty{\Omega}{#1}}

\global\long\def\elloneomega#1{\ellone{\Omega}{#1}}

\global\long\def\comega#1{\c{\Omega}{#1}}

\global\long\def\czeroomega#1{\czero{\Omega}{#1}}

\global\long\def\ellinftyN#1{\ellinfty{\N}{#1}}

\global\long\def\elloneN#1{\ellone{\N}{#1}}

\global\long\def\cN#1{\c{\N}{#1}}

\global\long\def\czeroN#1{\czero{\N}{#1}}

\global\long\def\conenorm#1{\left\llbracket #1\right\rrbracket }

\global\long\def\jamesseq{\mathcal{P}}

\global\long\def\finite#1{\mathcal{F}\parenth{#1}}

\global\long\def\selections#1{\mathcal{S}\parenth{#1}}

\global\long\def\ptwiselipat#1#2{\textup{PtLip}_{#1}\parenth{#2}}

\global\long\def\ptwiselip#1{\textup{PtLip}\parenth{#1}}

\global\long\def\ptwiselipconstant#1#2{\textup{\ensuremath{\Lambda}}_{#1}\parenth{#2}}

\newcommand{\Miek}{Miek Messerschmidt}

\newcommand{\MiekEmail}{mmesserschmidt@gmail.com}

\newcommand{\UPAddress}{Department of Mathematics and Applied Mathematics; University of Pretoria; Private~bag~X20 Hatfield; 0028 Pretoria; South Africa}

\newcommand{\UPAddresswithbreaks}{%
Department of Mathematics and Applied Mathematics\\
University of Pretoria\\
Private~bag~X20\\
Hatfield\\
0028 Pretoria\\
South Africa}

\newcommand{\ClaudeLeonAck}{The author's research was funded by The Claude Leon Foundation}

\newcommand{\paperabstract}{We prove that any correspondence (multi-function) mapping a metric
space into a Banach space that satisfies a certain pointwise Lipschitz
condition, always has a continuous selection that is pointwise Lipschitz
on a dense set of its domain. 

We apply our selection theorem to demonstrate a slight improvement
to a well-known version of the classical Bartle-Graves Theorem: Any
continuous linear surjection between infinite dimensional Banach spaces has a positively
homogeneous continuous right inverse that is pointwise Lipschitz on
a dense meager set of its domain.

An example devised by Aharoni and Lindenstrauss shows that our pointwise
Lipschitz selection theorem is in some sense optimal: It is impossible
to improve our pointwise Lipschitz selection theorem to one that yields
a selection that is pointwise Lipschitz on the whole of its domain
in general.}

\newcommand{\MSCCodesPrimary}{%
 	54C65\sep 
	54C60 
}%
\newcommand{\MSCCodesSecondary}{%
	 	46B99 
}%
\newcommand{\paperkeywords}{Selection theorem\sep pointwise Lipschitz map\sep Bartle-Graves Theorem} %

\title{A pointwise Lipschitz selection theorem}
\author{\Miek}
\address{\Miek; \UPAddress}
\email{\MiekEmail}
\thanks{\ClaudeLeonAck}

\begin{abstract}
	\paperabstract
\end{abstract}
\subjclass[2010]{\MSCCodesPrimary (primary), and \MSCCodesSecondary (secondary)}
\keywords{\paperkeywords}
\maketitle

\newcommand{\lowerptlip}[1]{lower pointwise $#1$-Lipschitz}

\section{Introduction}

\global\long\def\preslawski{Przes\l awski}

Many correspondences (multi-functions) exhibit some form of Lipschitz
behavior. A classical example is what may be termed the inverse image
correspondence of a continuous linear surjection between Banach spaces:
Let $X$ and $Y$ be Banach spaces and $T:X\to Y$ a continuous linear
surjection. We define the \emph{inverse image correspondence} $\varphi:Y\to2^{X}$
by $\varphi(y):=T^{-1}\{y\}$ for all $y\in Y$. It is easily seen
that a map $\tau:Y\to X$ is a selection of $\varphi$ (meaning $\tau(y)\in\varphi(y)$
for all $y\in Y$) if and only if $\tau$ is a right inverse of $T$.
It is well-known, by the Bartle-Graves Theorem (a version is stated
as Theorem~\ref{thm:classical-Bartle-Graves} in this paper), that
there always exists a continuous selection of $\varphi$. Modern proofs
of this version of the Bartle-Graves Theorem, e.g. \cite[Corollary~17.67]{AliprantisBorder},
proceed through a straightforward application of Michael's Selection
Theorem (stated in this paper as Theorem~\ref{thm:michael-selection-theorem}).

By the Open Mapping Theorem, it can be seen that the correspondence
$\varphi$ is Lipschitz when $2^{X}$ is endowed with the Hausdorff
distance. Furthermore, the correspondence $\varphi$ also exhibits
a form of pointwise Lipschitz behaviour. Again, by the Open Mapping
Theorem, it can be seen that there exists a constant $\alpha\geq0$
so that, for any $y\in Y$ and $x\in\varphi(y)$, the correspondence
$\psi:Y\to2^{X}$, defined by $\psi(z):=\varphi(z)\cap(x+\alpha\norm{y-z}\closedball X)$
for all $z\in Y$, is non-empty-valued. All selections $\tau:Y\to X$
of $\psi$ (continuous or not) will therefore be strongly pointwise
$\alpha$-Lipschitz at $y$ (by which we mean $\norm{\tau(y)-\tau(z)}\leq\alpha\norm{y-z}$
for all $z\in Y$).

Even though the correspondence $\varphi$ \emph{always }exhibits some
form of Lipschitz behavior, an example devised by Aharoni and Lindenstrauss
(cf. \cite{LindenstraussAharoni} and \cite[Example~1.20]{LindenstraussBenyamini})
shows that it is however impossible establish the existence of Lipschitz
selections of inverse image correspondences in general. Godefroy and
Kalton gave a characterization of the continuous linear surjections
between separable Banach spaces admitting Lipschitz right inverses
as exactly the ones with continuous linear right inverses, and hence,
as exactly those with complemented kernels (cf. \cite[Corollary~3.2]{GodefroyKalton}).
However, this does not extend to non-separable Banach spaces (cf.
\cite[Section~2.2]{KaltonNonlinGeom}). We also refer the reader to
the negative result \cite[Theorem~2.4]{PrzeslawskiYost} by \preslawski\
and Yost.

Still, the Lipschitz-like behaviour of the inverse image correspondence
of a continuous linear surjection between Banach spaces (and other
related correspondences\footnote{For example, for Banach spaces $X,Y$ and a closed cone $C\subseteq X$,
consider the inverse image correspondence of a continuous additive
positively homogeneous surjection $T:C\to Y$ (cf. \cite{deJeuMesserschmidtOpenMapping}).}) raises the following question:
\begin{question*}
\label{que:more-regular-selections}Given the general Lipschitz-like
behavior of the inverse image correspondence $\varphi$, as defined
above, do there exist selections of $\varphi$ that exhibit more regularity
than the mere continuity ensured by the classical Bartle-Graves Theorem?
More generally, is there a theorem for correspondences displaying
such Lipschitz-like behavior, akin to Michael's Selection Theorem,
asserting the existence of selections which exhibit more regularity
than mere continuity?
\end{question*}
We will give one positive answer to this question in this paper. Our
main goal in this paper is to prove a general Pointwise Lipschitz
Selection Theorem (Theorem~\ref{thm:ptwise-lip-selection-theorem}).
This result gives sufficient conditions under which a correspondence
always admits a continuous selection that is pointwise Lipschitz on
a dense set of its domain. Explicitly:

\begin{thmff}{Pointwise Lipschitz Selection Theorem}{thm:ptwise-lip-selection-theorem}Let
$(M,d)$ be a metric space and $X$ a Banach space. Let $\alpha\geq0$
and let $\varphi:M\to2^{X}$ be a non-empty\textendash , closed\textendash ,
and convex-valued correspondence that admits local strongly pointwise
$\alpha$-Lipschitz selections (as defined in Definition~\ref{def:correspondence-admits-strong-ptwise-lip-selections}).
If $\varphi$ has a (bounded) continuous selection, then, for every
$\beta>\alpha$, there exists a (bounded) continuous selection of
$\varphi$ that is pointwise $\beta$-Lipschitz (as defined in Definition~\ref{def:ptwise-lip-and-strongly-ptwise-lip-def})
on a dense set of $M$.

\end{thmff}

The proof proceeds through a somewhat delicate inductive construction
which is performed in proving Lemma~\ref{lem:existence-of-successively-more-ptwise-lip-sequence}.
There we prove the existence of a uniform Cauchy sequence of continuous
selections that are pointwise Lipschitz on the points successively
larger maximal separations of $M$ (cf. Definition~\ref{def:separation}).
Each selection in this sequence is constructed as a slight adjustment
of its predecessor so as to be pointwise Lipschitz at more points.
This is done while also taking care that our adjustments do not disturb
the predecessor where it is already known to be pointwise Lipschitz.
The limit of this sequence is then shown to have the desired properties
in the proof of Theorem~\ref{thm:ptwise-lip-selection-theorem}.
We refer the reader to Section~\ref{sec:Main-result} for further
details.

The condition of admitting local strongly pointwise $\alpha$-Lipschitz
selections required in the hypothesis of the above theorem is admittedly
somewhat synthetic. The reason for working with this condition in
favor of a more natural, more easily verified condition, is purely
to abstract out the important features required in the proof of Theorem~\ref{thm:ptwise-lip-selection-theorem}.
Yet, the critical reader may well raise the question: Why should correspondences
with this property even exist? In reply, we introduce the more natural
notion of ``lower pointwise Lipschitz-ness'' of a correspondence (cf.
Definition~\ref{def:lower-ptwise-lip}). Section~\ref{sec:Lower-pointwise-Lipschitz-correspondences}
is devoted to showing that being lower pointwise Lipschitz is sufficient
for a correspondence to admit strongly pointwise Lipschitz selections.
This allows us to prove versions of Theorem~\ref{thm:ptwise-lip-selection-theorem}
in Corollaries~\ref{cor:natural-ptwise-lip-sel-thm1} and~\ref{cor:natural-ptwise-lip-sel-thm2}
which are slightly less general, yet slightly more natural.

\begin{corff}{}{cor:natural-ptwise-lip-sel-thm1}%

Let $(M,d)$ be a metric space, $X$ a Banach space and $\alpha\geq0$.
Let $\varphi:M\tocorr X$ be a closed\textendash{} and convex-valued
lower hemicontinuous correspondence that is \lowerptlip{(\alpha+\varepsilon)}
for every $\varepsilon>0$. Then, for any $\beta>\alpha$, there exists
a continuous selection of $\varphi$ that is pointwise $\beta$-Lipschitz
on a dense set of $M$.

If, additionally, there exists a bounded continuous selection of $\varphi$,
then, for any $\beta>\alpha$, there also exists a bounded continuous
selection of $\varphi$ that is pointwise $\beta$-Lipschitz on a
dense set of $M$.

\end{corff}

\begin{corff}{}{cor:natural-ptwise-lip-sel-thm2}%

Let $(M,d)$ be a \textbf{bounded} metric space, $X$ a Banach space
and $\alpha\geq0$. Let $\varphi:M\tocorr X$ be a closed\textendash{}
and convex-valued lower hemicontinuous correspondence that is \lowerptlip{(\alpha+\varepsilon)}
for every $\varepsilon>0$. Then, for any $\beta>\alpha$, there exists
a bounded continuous selection of $\varphi$ that is pointwise $\beta$-Lipschitz
on a dense set of $M$.

\end{corff}

As an illustrative application, we establish the following slightly
improved version of the classical Bartle-Graves Theorem:

\begin{thmff}{Improved Bartle-Graves Theorem}{thm:improved-Bartle-Graves}Let
$X$ and $Y$ be infinite dimensional Banach spaces and $T:X\to Y$
a continuous linear surjection. Then there exists a constant $\eta>0$
and a positively homogeneous continuous right inverse $\tau:Y\to X$
of $T$ that is pointwise $\eta$-Lipschitz on a dense \textbf{meager}
set of $Y$.

\end{thmff}

We note that Theorem~\ref{thm:ptwise-lip-selection-theorem} is,
in some sense, optimal. To elaborate, in general it is impossible
to conclude that a selection yielded by Theorem~\ref{thm:ptwise-lip-selection-theorem}
is pointwise Lipschitz on its entire domain $M$. Should this be the
case in general, a result adapted from Durand-Cartagena and Jaramillo
\cite[Corollary~2.4]{Durand-CartagenaJaramillo} in combination with
a result by Sch\"affer \cite[Theorem~3.5]{Schaffer}\footnote{Proven independently by the author and Wortel \cite[Theorem~3.6]{intrinsic-metric},
while in ignorance of Sch\"affer's work.}, will show that an inverse image correspondence will always admit
a Lipschitz selection. This however contradicts the above mentioned
example devised by Aharoni and Lindenstrauss of an inverse image correspondence
that admits no Lipschitz selection. We refer the reader to Section~\ref{sec:aharoni-lindenstrauss-example}
for further details.

\bigskip

We give a brief outline of the organization of the paper.

\medskip

In Section~\ref{sec:Preliminary-definitions,-results}, we provide
the notation and definitions used throughout this paper. Some elementary
preliminary results are also proven. Specifically, Section~\ref{subsec:Separations-in-metric-spaces}
gives some quite elementary results on so-called separations in metric
spaces, and Section~\ref{subsec:Pointwise-Lipschitz-maps} proves
some basic results on pointwise Lipschitz functions.

Section~\ref{sec:Main-result} will establish our main result, Theorem~\ref{thm:ptwise-lip-selection-theorem}.
The proof of this theorem is presented in two steps. Firstly, we give
sufficient conditions for a correspondence to have a uniform Cauchy
sequence of continuous selections, where the members of this sequence
are pointwise Lipschitz on successively finer separations (cf. Lemma~\ref{lem:existence-of-successively-more-ptwise-lip-sequence}).
The second step analyses the limit of such a Cauchy sequence of selections
and shows the limit is a selection which is pointwise Lipschitz on
a dense set of its domain (cf. Theorem~\ref{thm:ptwise-lip-selection-theorem}).

In Section~\ref{sec:Lower-pointwise-Lipschitz-correspondences} we
define the notion of lower pointwise Lipschitz-ness of a correspondence
(cf. Definition~\ref{def:lower-ptwise-lip}). This property is more
natural than that of admitting local strongly pointwise Lipschitz
selections as required in the hypothesis of Theorem~\ref{thm:ptwise-lip-selection-theorem}.
Proposition~\ref{prop:lower-(alpha+eps)-Lip-implies-admits-strong-pw-lip-selections}
shows that lower pointwise Lipschitz-ness of a correspondence is sufficient
for Theorem~\ref{thm:ptwise-lip-selection-theorem} to be applicable
to the correspondence, and results in the somewhat more natural versions
of Theorem~\ref{thm:ptwise-lip-selection-theorem} in Corollaries~\ref{cor:natural-ptwise-lip-sel-thm1}
and~\ref{cor:natural-ptwise-lip-sel-thm2}.

We give an application of our Pointwise Lipschitz Selection Theorem
in Section~\ref{sec:Applications} by establishing a slightly improved
version (Theorem~\ref{thm:improved-Bartle-Graves}) of the classical
Bartle-Graves Theorem.

Finally, in Section~\ref{sec:aharoni-lindenstrauss-example} we briefly
discuss the significance of an example devised by Aharoni and Lindenstrauss
to our results. Specifically, we argue why Theorem~\ref{thm:ptwise-lip-selection-theorem}
is, in some sense, the best possible general Lipschitz selection theorem
one can hope to prove.

\section{Preliminary definitions, results and notation\label{sec:Preliminary-definitions,-results}}

\subsection{Notation for balls in metric spaces}

For a metric space $(M,d)$ with $a\in M$ and $r>0$, we will denote
the open and closed balls with radius $r$ about $a$ respectively
by
\begin{eqnarray*}
\openball M(a,r) & := & \set{b\in M}{d(a,b)<r},\\
\closedball M(a,r) & := & \set{b\in M}{d(a,b)\leq r}.
\end{eqnarray*}

Let $X$ be a normed space. We denote the open unit ball, closed unit
ball and unit sphere respectively by $\openball X$, $\closedball X$
and $\sphere X$. To aid in readability by reducing nested parentheses,
for $x\in X$ and $r>0$, we will sometimes write $x+r\openball X$
and $x+r\closedball X$ instead of $\openball X(x,r)$ and $\closedball X(x,r)$.
We will view any subset of $X$ as a metric space with the metric
induced from the norm on $X$.

\subsection{Spaces of functions}

Let $F$ be a Hausdorff space and $X$ a normed space. By $C(F,X)$
we will denote the normed space of all bounded continuous functions
on $F$ taking values in $X$, endowed with the uniform norm $\norm{\cdot}_{\infty}$.
A standard argument shows that $C(F,X)$ is a Banach space whenever
$X$ is a Banach space. For any function $f:F\to X$ and $G\subseteq F$,
we denote the restriction of $f$ to $G$ by $\restrict fG:G\to X$.

\subsection{Correspondences}

Let $A,B$ be sets. By a \emph{correspondence} we mean a set-valued
map $\varphi:A\to2^{B}$ and we will use the notation $\varphi:A\tocorr B$.
We will say $\varphi$ is \emph{non-empty-valued} (respectively, \emph{convex-valued}
or \emph{closed-valued}) if $\varphi(a)$ is non-empty (respectively,
convex or closed) for all $a\in A$ (granted that these notions make
sense in $B$).

Let $A$ and $B$ be topological spaces and $\varphi:A\tocorr B$
be any correspondence. We will say that $\varphi$ is \emph{lower
hemicontinuous} \emph{at $a\in A$}, if, for every open set $U\subseteq B$
satisfying $\varphi(a)\cap U\neq\emptyset$, there exists some open
set $V\ni a$ satisfying $\varphi(v)\cap U\neq\emptyset$ for all
$v\in V$. We will say that $\varphi$\emph{ }is\emph{ lower hemicontinuous,
}if $\varphi$ is lower hemicontinuous at every point in $A$. By
a \emph{selection of $\varphi$ }we mean a function $f:A\to B$ satisfying
$f(a)\in\varphi(a)$ for all $a\in A.$

We quote the following two well-known classical results that we will
need in later sections.
\begin{thm}[{Michael's Selection Theorem \cite[Theorem~17.66]{AliprantisBorder}}]
\label{thm:michael-selection-theorem}Let $P$ be a paracompact space
and $X$ a Banach space. Every non-empty\textendash , closed\textendash{}
and convex-valued lower hemicontinuous correspondence $\varphi:P\tocorr X$
has a continuous selection.
\end{thm}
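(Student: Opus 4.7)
The plan is to construct a uniformly Cauchy sequence $(f_n)_{n\in\N}$ in $C(P,X)$ of continuous approximate selections satisfying $d(f_n(p),\varphi(p))<2^{-n}$ for every $p\in P$, and then to pass to the uniform limit $f$. Since $X$ is a Banach space, the uniform limit $f:P\to X$ exists and is continuous; and since each $\varphi(p)$ is closed and $d(f_n(p),\varphi(p))\to 0$, the limit automatically satisfies $f(p)\in\varphi(p)$ throughout $P$.

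The sole technical ingredient is an approximation step: for any non-empty\textendash{} and convex-valued lower hemicontinuous correspondence $\psi:P\tocorr X$ and any $\varepsilon>0$, I would construct a continuous $g:P\to X$ with $d(g(p),\psi(p))<\varepsilon$ throughout $P$. To this end, consider the family $U_x:=\{p\in P \mid \psi(p)\cap(x+\varepsilon\openball X)\neq\emptyset\}$ for $x\in X$. Lower hemicontinuity makes each $U_x$ open, while non-emptiness of $\psi$ makes $\{U_x : x\in X\}$ cover $P$. Paracompactness then furnishes a locally finite open refinement $\{V_\alpha\}_\alpha$, an indexing $\alpha\mapsto x_\alpha$ with $V_\alpha\subseteq U_{x_\alpha}$, and a subordinate partition of unity $\{\rho_\alpha\}_\alpha$. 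The function $g(p):=\sum_\alpha\rho_\alpha(p)\,x_\alpha$ is continuous by local finiteness; for each $p$ and each $\alpha$ with $\rho_\alpha(p)>0$, one can choose a witness $y_\alpha(p)\in\psi(p)\cap(x_\alpha+\varepsilon\openball X)$, and then convexity of $\psi(p)$ yields $\sum_\alpha\rho_\alpha(p)\,y_\alpha(p)\in\psi(p)$ within distance $\varepsilon$ of $g(p)$.

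The recursion starts by applying the approximation step to $\psi=\varphi$ with $\varepsilon=2^{-1}$ to produce $f_1$. Given $f_n$ with $d(f_n(p),\varphi(p))<2^{-n}$, define the refined correspondence $\psi_{n+1}(p):=\varphi(p)\cap(f_n(p)+2^{-n}\openball X)$, which is non-empty-valued by the approximation property of $f_n$ and convex-valued as an intersection of convex sets. Applying the approximation step to $\psi_{n+1}$ with $\varepsilon=2^{-(n+1)}$ then yields a continuous $f_{n+1}$ with $d(f_{n+1}(p),\varphi(p))<2^{-(n+1)}$; and since the nearby point in $\psi_{n+1}(p)$ lies within $2^{-n}$ of $f_n(p)$, one also obtains $\|f_{n+1}(p)-f_n(p)\|<2^{-n}+2^{-(n+1)}$, whence $(f_n)$ is Cauchy in $C(P,X)$.

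The principal obstacle will be verifying that the intersected correspondence $\psi_{n+1}$ is itself lower hemicontinuous, which is exactly what is needed for the approximation step to be applicable at the inductive stage. Given an open $U\subseteq X$ meeting $\psi_{n+1}(p_0)$ at some $y_0$, one exploits the fact that $y_0$ lies \emph{strictly} inside $f_n(p_0)+2^{-n}\openball X$ to extract a smaller open $U'\subseteq U$ around $y_0$ and a radius $r<2^{-n}$ with $U'\subseteq f_n(p_0)+r\openball X$. Lower hemicontinuity of $\varphi$ then yields a neighborhood of $p_0$ on which $\varphi$ meets $U'$, while continuity of $f_n$ yields a (possibly smaller) neighborhood on which $U'\subseteq f_n(p)+2^{-n}\openball X$ persists; intersecting these two neighborhoods produces a neighborhood of $p_0$ on which $\psi_{n+1}$ meets $U$, as required.
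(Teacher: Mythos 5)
Your argument is correct: it is the standard proof of Michael's theorem (approximate selections built from a partition of unity subordinate to the open cover $\{U_x\}$, then iteration over the open-ball-intersected correspondences $\psi_{n+1}$, whose lower hemicontinuity you verify correctly, and a uniform limit landing in the closed convex values), which is essentially the proof in the cited source; the paper itself only quotes the theorem from Aliprantis--Border without proof. One negligible quibble: since the $f_n$ need not be bounded, you should say the sequence is uniformly Cauchy rather than Cauchy in $C(P,X)$ (which the paper defines as the \emph{bounded} continuous functions), though this changes nothing in the limit argument.
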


{}
\begin{thm}[{Stone's Theorem \cite[Corollary 1]{Stone}}]
\label{thm:stone's-theorem}Every metric space is paracompact.
\end{thm}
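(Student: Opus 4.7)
The plan is to verify paracompactness directly from the definition: every open cover of $(M,d)$ admits a locally finite open refinement. I would adopt M.~E.~Rudin's elegant construction, which exploits the metric structure via balls at geometrically shrinking scales. Fix an open cover $\{U_\alpha\}_{\alpha \in A}$ and, invoking the Axiom of Choice, impose a well-ordering on $A$.

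Recursively on $n \in \N$, for each $\alpha \in A$ I would set
\[
V_{\alpha,n} := \bigcup_{x \in T_{\alpha,n}} \openball M(x, 2^{-n}),
\]
where $T_{\alpha,n}$ consists of those $x \in M$ satisfying: (i) $\alpha$ is the least index in $A$ with $x \in U_\alpha$; (ii) $x \notin V_{\beta,m}$ for all $m < n$ and all $\beta \in A$; and (iii) $\closedball M(x, 3 \cdot 2^{-n}) \subseteq U_\alpha$. Each $V_{\alpha,n}$ is then open, and $V_{\alpha,n} \subseteq U_\alpha$ by (iii), so $\{V_{\alpha,n}\}_{(\alpha,n) \in A \times \N}$ is automatically an open refinement of the given cover. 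For the covering property, given $y \in M$ I would let $\alpha$ be least with $y \in U_\alpha$ and let $n$ be minimal with $\closedball M(y, 3 \cdot 2^{-n}) \subseteq U_\alpha$; then either $y$ already lies in $V_{\beta,m}$ for some $m < n$, or else $y \in T_{\alpha,n}$ itself and so $y \in V_{\alpha,n}$.

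The main obstacle is local finiteness, which requires a careful interplay between the scales $2^{-n}$, the clearance $3 \cdot 2^{-n}$ in (iii), the exclusion (ii), and the least-index rule (i). Fix $y \in M$ and, using the covering property, choose $(\gamma,N)$ with $y \in V_{\gamma,N}$ together with a centre $x_0 \in T_{\gamma,N}$ satisfying $d(x_0,y) < 2^{-N}$. Taking the neighbourhood $W := \openball M(y, r)$ with $r$ chosen as a small fraction of the positive quantity $2^{-N} - d(x_0,y)$, I would estimate as follows: any centre $x \in T_{\alpha,n}$ contributing to $V_{\alpha,n} \cap W \neq \emptyset$ must satisfy $d(x, x_0) \geq 2^{-N}$ by the exclusion (ii) applied to the ball $\openball M(x_0, 2^{-N}) \subseteq V_{\gamma,N}$, which via the triangle inequality rules out every scale $n$ with $2^{-n}$ too small. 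For the finitely many remaining scales, two distinct indices $\alpha < \alpha'$ both contributing would force the corresponding centres to lie within $2 \cdot 2^{-n} + 2r$ of each other while being simultaneously separated by at least $3 \cdot 2^{-n}$ (since $x' \notin U_\alpha$ by (i), but $\closedball M(x, 3 \cdot 2^{-n}) \subseteq U_\alpha$ by (iii)); making $r$ small enough renders this incompatible. Hence $W$ meets only finitely many $V_{\alpha,n}$, which completes the verification.
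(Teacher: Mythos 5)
Your argument is correct, but it is worth noting that the paper does not prove this statement at all: Theorem~\ref{thm:stone's-theorem} is quoted as a classical black box with a citation to Stone's original article, whose proof runs through a rather long argument (full normality and star-refinements). What you have written out is essentially M.~E.~Rudin's short direct proof, and your sketch is completable exactly as you indicate: the recursion is well-founded because $T_{\alpha,n}$ only refers to the sets $V_{\beta,m}$ with $m<n$; the refinement and covering properties are immediate from (iii) and the minimal choices of $\alpha$ and $n$; and the two local-finiteness estimates go through with explicit constants. For the latter, note that the exclusion step via (ii) only applies to scales $n>N$ (which is all you need, since the scales $n\le N$ are finitely many anyway), and that there is no circularity in the choice of $r$: taking, say, $r=\tfrac14\bigl(2^{-N}-d(x_0,y)\bigr)$ once and for all, any scale $n$ with $V_{\alpha,n}$ meeting $W=\openball M(y,r)$ satisfies $2^{-n}\ge 2^{-N}-d(x_0,y)-r\ge 3r$ or $n\le N$, so only finitely many scales survive and each surviving scale has $2^{-n}\ge 3r$; then two indices $\alpha<\alpha'$ at the same scale would force centres with $d(x,x')<2\cdot 2^{-n}+2r\le \tfrac{8}{3}\cdot 2^{-n}$ while (i) and (iii) force $d(x,x')>3\cdot 2^{-n}$, a contradiction, so $W$ meets at most one $V_{\alpha,n}$ per surviving scale. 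Compared with the paper's approach of simply citing Stone, your route buys a self-contained, elementary proof (at the cost of invoking the well-ordering theorem, which is unavoidable in any case); it is the standard modern proof and perfectly adequate here.
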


\subsection{Separations in metric spaces\label{subsec:Separations-in-metric-spaces}}

In this section we give some basic definitions and results concerning
separations in metric spaces. Lemmas \ref{lem:maximal-r-separations-containing-another-exist}
and \ref{lem:union-of-nested-maximal-r-separations-is-dense} are
elementary verifications whose proofs we omit.
\begin{defn}
\label{def:separation}Let $(M,d)$ be a metric space. For $r>0$,
a set $B\subseteq M$ will be called an \emph{$r$-separation} in
$M$, if, for distinct $a,b\in B$, we have $d(a,b)\geq r$. We partially
order the set of all $r$-separations in $M$ by inclusion.
\end{defn}

A straightforward application of Zorn's Lemma will establish:
\begin{lem}
\label{lem:maximal-r-separations-containing-another-exist}Let $(M,d)$
be a metric space. Let $r>0$ and let $B$ be an $r$-separation in
$M$. Then there exists a maximal $r$-separation in $M$ containing
$B$.
\end{lem}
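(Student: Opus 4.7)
The plan is a textbook application of Zorn's Lemma, exactly as the author signals. Let $\mathcal{S}$ denote the collection of all $r$-separations of $M$ that contain $B$, partially ordered by inclusion. This is non-empty since $B\in\mathcal{S}$.

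First I would verify that every chain in $\mathcal{S}$ has an upper bound, by showing that the union of such a chain is again in $\mathcal{S}$. Let $\mathcal{C}\subseteq\mathcal{S}$ be a chain and put $U:=\bigcup\mathcal{C}$. Clearly $B\subseteq U$. To see that $U$ is an $r$-separation, pick any distinct $a,b\in U$. Then $a\in C_1$ and $b\in C_2$ for some $C_1,C_2\in\mathcal{C}$; since $\mathcal{C}$ is a chain we may assume $C_1\subseteq C_2$, so both $a$ and $b$ lie in $C_2$, and hence $d(a,b)\geq r$ by the defining property of $C_2$. Thus $U\in\mathcal{S}$ and $U$ is an upper bound for $\mathcal{C}$.

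Zorn's Lemma then yields a maximal element $B^{*}\in\mathcal{S}$. It remains to argue that $B^{*}$ is in fact maximal among \emph{all} $r$-separations of $M$, not merely among those containing $B$. If some $r$-separation $B'\supsetneq B^{*}$ existed, then $B'\supseteq B^{*}\supseteq B$, so $B'\in\mathcal{S}$, contradicting maximality of $B^{*}$ in $\mathcal{S}$.

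There is essentially no obstacle here beyond the one bookkeeping point of confirming that maximality within $\mathcal{S}$ upgrades to maximality in the ambient poset of all $r$-separations of $M$; the union-of-chain verification is entirely routine, which is presumably why the author elected to omit the proof.
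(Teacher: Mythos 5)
Your proof is correct and follows exactly the route the paper indicates: the paper omits the proof, noting only that it is "a straightforward application of Zorn's Lemma," and your chain-union argument plus the upgrade from maximality in $\mathcal{S}$ to maximality among all $r$-separations is precisely that standard argument.
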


{}
\begin{lem}
\label{lem:union-of-nested-maximal-r-separations-is-dense}Let $(M,d)$
be a metric space and let $\{r_{n}\}$ be any descending sequence
of positive real numbers that converges to zero. For each $n\in\N$,
let $B_{n}$ be a maximal $r_{n}$-separation with $B_{n-1}\subseteq B_{n}$
(where we set $B_{0}:=\emptyset$). Then $\bigcup_{n\in\N}B_{n}$
is dense in $M$.
\end{lem}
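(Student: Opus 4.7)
The plan is to show that every point of $M$ lies in the closure of $\bigcup_{n\in\N} B_{n}$ by a direct $\varepsilon$-argument that uses only the maximality of each $B_{n}$ together with $r_{n}\to 0$; the nesting hypothesis $B_{n-1}\subseteq B_{n}$ is not actually needed for density per se, though it is the reason the union is worth considering as a single growing set.

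Fix $m\in M$ and $\varepsilon>0$. Since $\{r_{n}\}$ descends to zero, I would first choose $n\in\N$ large enough so that $r_{n}<\varepsilon$. I then split into the trivial case $m\in B_{n}$, which immediately places $m$ in $\bigcup_{k\in\N}B_{k}$, and the interesting case $m\notin B_{n}$. In the latter case, appeal to the maximality of $B_{n}$ as an $r_{n}$-separation: the strictly larger set $B_{n}\cup\{m\}$ cannot itself be an $r_{n}$-separation, so there exist distinct points in $B_{n}\cup\{m\}$ at distance strictly less than $r_{n}$. Since $B_{n}$ on its own is an $r_{n}$-separation, any such violating pair must involve $m$. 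Therefore there is some $b\in B_{n}$ with $d(m,b)<r_{n}<\varepsilon$, and $b\in\bigcup_{k\in\N}B_{k}$ witnesses density at $m$.

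I don't anticipate any real obstacle here; the statement is essentially a repackaging of the definition of maximality. The only minor point worth being careful about is distinguishing the case $m\in B_{n}$ from $m\notin B_{n}$ before invoking maximality, so that the failure of $B_{n}\cup\{m\}$ to be an $r_{n}$-separation is forced to come from a pair that actually includes $m$ (rather than from a pre-existing violation inside $B_{n}$, which is ruled out by $B_{n}$ being an $r_{n}$-separation to begin with).
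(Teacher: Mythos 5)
Your argument is correct and is precisely the elementary maximality-plus-$r_n\to 0$ verification that the paper has in mind (it omits the proof of this lemma as routine): the case split $m\in B_n$ versus $m\notin B_n$, followed by the observation that a violating pair in $B_n\cup\{m\}$ must involve $m$, is exactly right, and your remark that the nesting $B_{n-1}\subseteq B_n$ is not needed for density itself is also accurate.
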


\subsection{Pointwise Lipschitz maps\label{subsec:Pointwise-Lipschitz-maps}}

In this section we introduce the notion of pointwise Lipschitz functions.
\begin{defn}
\label{def:ptwise-lip-and-strongly-ptwise-lip-def}Let $(M,d)$ and
$(M',d')$ be metric spaces, $\alpha\geq0$ and $f:M\to M'$ any map.
\begin{enumerate}
\item We will say $f$ is \emph{pointwise $\alpha$-Lipschitz at $b\in M$,}
if
\[
\limsup_{r\to0^{+}}\parenth{r^{-1}\sup\set{d'(f(b),f(a))}{a\in\closedball M(b,r)}}\leq\alpha.
\]
For a set $S\subseteq M$, we will say $f$ is \emph{pointwise $\alpha$-Lipschitz
on $S$} if $f$ is pointwise $\alpha$-Lipschitz at every point of
$S$.\medskip
\item We will say $f$ is \emph{strongly pointwise $\alpha$-Lipschitz}\footnote{The term \emph{calmness }also occurs in the literature \cite[Section~1.3]{DontchevRockafellarImplicitFunctions}.}\emph{
at $b\in M$,} if, for all $a\in M$,
\[
d'(f(b),f(a))\leq\alpha d(b,a).
\]
\end{enumerate}
\end{defn}

The following somewhat technical lemmas will be needed in later sections.
In summary, Lemma~\ref{lem:pt-wise-lip-the-same-for-closed-or-open-balls}
shows that we may replace closed balls with open balls in the definition
of pointwise Lipschitz-ness, and Lemma~\ref{lem:homogeneous-exstension-preserves-pw-lip}
shows that pointwise Lipschitz-ness is preserved by homogeneous extensions
of bounded functions.
\begin{lem}
\label{lem:pt-wise-lip-the-same-for-closed-or-open-balls}Let $(M,d)$
and $(M',d')$ be metric spaces and $\alpha\geq0$. For some $b\in M$,
a function $f:M\to M'$ is pointwise $\alpha$-Lipschitz at $b$ if
and only if
\[
\limsup_{r\to0^{+}}\parenth{r^{-1}\sup\set{d(f(b),f(a))}{a\in\openball M(b,r)}}\leq\alpha.
\]
\end{lem}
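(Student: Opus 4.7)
The plan is to use the elementary set inclusions
\[
\openball M(b,r)\subseteq \closedball M(b,r)\subseteq \openball M(b,r'),
\]
valid whenever $0<r<r'$, to compare the two suprema appearing in the closed-ball and open-ball formulations. Write $C(r):=\sup\{d'(f(b),f(a)):a\in\closedball M(b,r)\}$ and $O(r):=\sup\{d'(f(b),f(a)):a\in\openball M(b,r)\}$. The inclusions above immediately give $O(r)\leq C(r)\leq O(r')$ for any $0<r<r'$.

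For the forward implication, if $f$ is pointwise $\alpha$-Lipschitz at $b$, then $\limsup_{r\to 0^+}r^{-1}C(r)\leq\alpha$ by definition, and since $r^{-1}O(r)\leq r^{-1}C(r)$ for every $r>0$, I can pass to $\limsup$ and obtain the desired bound on $\limsup_{r\to 0^+}r^{-1}O(r)$.

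For the reverse implication, assume $\limsup_{r\to 0^+}r^{-1}O(r)\leq\alpha$ and fix $\varepsilon>0$. For each $r>0$, apply the second inclusion with $r':=(1+\varepsilon)r$ to obtain
\[
r^{-1}C(r)\leq r^{-1}O((1+\varepsilon)r)=(1+\varepsilon)\,\bigl((1+\varepsilon)r\bigr)^{-1}O((1+\varepsilon)r).
\]
As $r\to 0^+$ the quantity $(1+\varepsilon)r$ also tends to $0^+$, so taking $\limsup$ on both sides yields $\limsup_{r\to 0^+}r^{-1}C(r)\leq(1+\varepsilon)\alpha$. Since $\varepsilon>0$ was arbitrary, letting $\varepsilon\to 0^+$ gives $\limsup_{r\to 0^+}r^{-1}C(r)\leq\alpha$, which is exactly the pointwise $\alpha$-Lipschitz condition at $b$.

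There is no real obstacle here; the only point that requires a moment of care is the reverse direction, where one must insert a slightly larger radius $(1+\varepsilon)r$ to convert a closed ball into an open one without losing the correct scaling factor in the limsup. Everything else is an immediate consequence of the inclusions $\openball M(b,r)\subseteq \closedball M(b,r)\subseteq \openball M(b,(1+\varepsilon)r)$ and monotonicity of the suprema.
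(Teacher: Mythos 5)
Your proof is correct and follows essentially the same route as the paper: the only substantive step in either argument is enclosing the closed ball $\closedball M(b,r)$ in an open ball of slightly larger radius and controlling the ratio of radii in the limsup. The paper does this with an additive enlargement $r+\kappa$ and explicit $\varepsilon$\textendash$\delta$ estimates, while you use the multiplicative dilation $(1+\varepsilon)r$ and a change of variables in the limsup, which is a slightly cleaner bookkeeping of the same idea.
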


\begin{proof}
Let $f$ be pointwise $\alpha$-Lipschitz at $b$. Since $\openball M(b,r)\subseteq\closedball M(b,r)$
for all $r>0$, we have
\begin{eqnarray*}
 &  &\mkern-72mu \limsup_{r\to0^{+}}\parenth{r^{-1}\sup\set{d(f(b),f(a))}{a\in\openball M(b,r)}}\\
 & \leq & \limsup_{r\to0^{+}}\parenth{r^{-1}\sup\set{d(f(b),f(a))}{a\in\closedball M(b,r)}}\\
 & \leq & \alpha.
\end{eqnarray*}

Conversely, let $\limsup_{r\to0^{+}}\parenth{r^{-1}\sup\set{d(f(b),f(a))}{a\in\openball M(b,r)}}\leq\alpha.$
Let $\varepsilon>0$ be arbitrary. Then there exists some $s>0$ such
that, for all $r\in(0,s)$, we have
\[
r^{-1}\sup\set{d(f(b),f(a))}{a\in\openball M(b,r)}<\alpha+2^{-1}\varepsilon.
\]
Let $r\in(0,s)$ be arbitrary. Then, for any $\kappa>0$ satisfying
\[
\kappa<\min\{2^{-1}r\varepsilon(\alpha+2^{-1}\varepsilon)^{-1},s-r\},
\]
we have $0<r+\kappa<s$, and hence
\begin{eqnarray*}
 &  &\mkern-72mu r^{-1}\sup\set{d(f(b),f(a))}{a\in\closedball M(b,r)}\\
 & \leq & r^{-1}\sup\set{d(f(b),f(a))}{a\in\openball M(b,r+\kappa)}\\
 & = & r^{-1}(r+\kappa)(r+\kappa)^{-1}\sup\set{d(f(b),f(a))}{a\in\openball M(b,r+\kappa)}\\
 & < & r^{-1}(r+\kappa)(\alpha+2^{-1}\varepsilon)\\
 & < & (\alpha+2^{-1}\varepsilon)+\kappa r^{-1}(\alpha+2^{-1}\varepsilon)\\
 & < & \alpha+\frac{1}{2}\varepsilon+\frac{1}{2}\varepsilon\\
 & = & \alpha+\varepsilon.
\end{eqnarray*}
Since $r\in(0,s)$ was chosen arbitrarily, we obtain
\[
\sup_{r\in(0,s)}\parenth{r^{-1}\sup\set{d(f(b),f(a))}{a\in\closedball M(b,r)}}\leq\alpha+\varepsilon.
\]
Since $\varepsilon>0$ was chosen arbitrarily, we obtain
\[
\limsup_{r\to0^{+}}\parenth{r^{-1}\sup\set{d(f(b),f(a))}{a\in\closedball M(b,r)}}\leq\alpha.\qedhere
\]
\end{proof}
\begin{lem}
\label{lem:homogeneous-exstension-preserves-pw-lip}\global\long\def\underrho{\underline{\rho}}
Let $Y$ and $X$ be normed spaces. Let $y\in\sphere Y$, $\beta>0$,
and let $\underrho\in C(\sphere Y,X)$ be pointwise $\beta$-Lipschitz
at $y$. Then the positively homogeneous extension $\rho:Y\to X$
of $\underrho$, defined by
\[
\rho(z):=\begin{cases}
0 & \text{if }z=0\\
\norm z\underline{\rho}\parenth{\frac{z}{\norm z}} & \text{if }z\neq0,
\end{cases}\quad(z\in Y),
\]
is continuous and is pointwise $(2\beta+\norm{\underline{\rho}}_{\infty})$-Lipschitz
on the set $\{\lambda y\in Y | \lambda>0\}.$
\end{lem}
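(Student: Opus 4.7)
The plan is to verify continuity and the pointwise Lipschitz estimate separately; the latter will follow from a direct decomposition of $\rho(z)-\rho(\lambda y)$ into a ``radial'' contribution controlled by $\norm{\underline{\rho}}_\infty$ and an ``angular'' contribution controlled by the pointwise $\beta$-Lipschitz hypothesis at $y$. Continuity is essentially routine: on $Y\setminus\{0\}$ the map $z\mapsto z/\norm z$ is continuous into $\sphere Y$, so $\rho$ is continuous there by composition; and at the origin, the crude bound $\norm{\rho(z)}\leq\norm z\cdot\norm{\underline{\rho}}_\infty$ (valid since $\underline{\rho}$ is bounded) shows $\rho(z)\to 0$ as $z\to 0$.

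For the pointwise Lipschitz estimate, fix $\lambda>0$ and $\varepsilon>0$. For any $z\neq 0$ I would write
\[
\rho(z)-\rho(\lambda y) \;=\; \norm z\bigl[\underline{\rho}(z/\norm z)-\underline{\rho}(y)\bigr] \;+\; (\norm z-\lambda)\,\underline{\rho}(y).
\]
The second summand has norm at most $\norm{\underline{\rho}}_\infty\cdot\norm{z-\lambda y}$ by the reverse triangle inequality. For the first, the identity $z-\norm z y = (z-\lambda y)+(\lambda-\norm z)y$ combined with another reverse triangle estimate yields $\norm{z/\norm z-y}\leq 2\norm{z-\lambda y}/\norm z$. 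The pointwise $\beta$-Lipschitz hypothesis on $\underline{\rho}$ at $y$ (unpacking the $\limsup$) produces some $r_0>0$ such that $\norm{\underline{\rho}(w)-\underline{\rho}(y)}\leq(\beta+\varepsilon)\norm{w-y}$ for every $w\in\sphere Y$ with $\norm{w-y}\leq r_0$; applying this with $w=z/\norm z$, which is legal for $z$ sufficiently close to $\lambda y$, gives
\[
\norm z\cdot\norm{\underline{\rho}(z/\norm z)-\underline{\rho}(y)} \;\leq\; 2(\beta+\varepsilon)\,\norm{z-\lambda y}.
\]
Combining the two summand bounds then yields $\norm{\rho(z)-\rho(\lambda y)}\leq(2\beta+2\varepsilon+\norm{\underline{\rho}}_\infty)\norm{z-\lambda y}$ on a sufficiently small ball about $\lambda y$.

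Taking the $\limsup$ of $r^{-1}\sup\set{\norm{\rho(\lambda y)-\rho(z)}}{z\in\closedball Y(\lambda y,r)}$ as $r\to 0^+$, and then letting $\varepsilon\to 0$, produces the claimed bound $2\beta+\norm{\underline{\rho}}_\infty$. The only mild obstacle is uniformity in the application of the pointwise Lipschitz hypothesis: one needs $\norm z$ to stay bounded away from $0$ on some neighborhood of $\lambda y$ so that the inequality $\norm{z/\norm z-y}\leq 2\norm{z-\lambda y}/\norm z$ can be used both to legitimize $w=z/\norm z$ as a point within $r_0$ of $y$ and to keep the factor $1/\norm z$ under control; this is automatic since $\lambda>0$. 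The factor of $2$ arising in the radial-to-angular conversion is precisely what inflates the constant from $\beta$ to $2\beta$ in the final estimate.
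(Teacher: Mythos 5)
Your proposal is correct and follows essentially the same route as the paper: split $\rho(z)-\rho(\lambda y)$ into an angular part, controlled by the pointwise $\beta$-Lipschitz hypothesis at $y$ together with the estimate $\norm{z/\norm z-y}\leq 2\norm{z-\lambda y}/\norm z$ (the source of the factor $2$), and a radial part, controlled by $\norm{\underline{\rho}}_{\infty}$ via the reverse triangle inequality. The only differences are cosmetic (you attach the coefficient $\norm z$ to the angular term and evaluate $\underline{\rho}$ at $y$ in the radial term, and you unpack the $\limsup$ into a local Lipschitz estimate at $y$ rather than working with the sup-over-balls quantity directly), and both yield the same constant $2\beta+\norm{\underline{\rho}}_{\infty}$.
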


\begin{proof}
That $\rho$ is continuous is a straightforward exercise using reverse
triangle inequality and the boundedness of $\underline{\rho}$.

Let $\varepsilon>0$ be arbitrary. Since $\underrho$ is pointwise
$\beta$-Lipschitz at $y$, there exists some $R\in(0,1)$, such that,
for all $r\in(0,R)$,
\[
r^{-1}\sup\set{\norm{\underline{\rho}\parenth y-\underline{\rho}\parenth x}}{x\in\closedball Y(y,r)\cap\sphere Y}<\beta+2^{-1}\varepsilon.
\]

Let $z\in\set{\lambda y\in Y}{\lambda>0}$ and $s\in(0,2^{-1}\norm zR)$
be arbitrary. For any $x\in\closedball Y(z,s)$, we note that $x\neq0$,
since $s<\norm z$. Furthermore,
\begin{eqnarray*}
\norm{\frac{z}{\norm z}-\frac{x}{\norm x}} & \leq & \norm{\frac{z}{\norm z}-\frac{x}{\norm z}}+\norm{\frac{x}{\norm z}-\frac{x}{\norm x}}\\
 & = & \frac{1}{\norm z}\norm{z-x}+\abs{\frac{1}{\norm z}-\frac{1}{\norm x}}\norm x\\
 & \leq & \frac{s}{\norm z}+\abs{\frac{\norm x-\norm z}{\norm z\norm x}}\norm x\\
 & \leq & \frac{s}{\norm z}+\frac{1}{\norm z}\norm{z-x}\\
 & \leq & \frac{2s}{\norm z}\\
 & < & R.
\end{eqnarray*}
Therefore, for any $x\in\closedball Y(z,s)$, we have
\[
\norm{y-\frac{x}{\norm x}}=\norm{\frac{z}{\norm z}-\frac{x}{\norm x}}\leq\frac{2s}{\norm z}<R,
\]
and hence,
\begin{eqnarray*}
s^{-1}\norm{\rho(z)-\rho(x)} & = & s^{-1}\norm{\norm z\underrho\parenth{\frac{z}{\norm z}}-\norm x\underrho\parenth{\frac{x}{\norm x}}}\\
 & \leq & s^{-1}\norm{\norm z\underrho\parenth{\frac{z}{\norm z}}-\norm z\underrho\parenth{\frac{x}{\norm x}}}\\
 &  & \quad\quad+s^{-1}\norm{\norm z\underrho\parenth{\frac{x}{\norm x}}-\norm x\underrho\parenth{\frac{x}{\norm x}}}\\
 & = & s^{-1}\norm z\norm{\underrho\parenth{\frac{z}{\norm z}}-\underrho\parenth{\frac{x}{\norm x}}}\\
 &  & \quad\quad+s^{-1}\abs{\vphantom{\sum}\norm z-\norm x}\norm{\underrho\parenth{\frac{x}{\norm x}}}\\
 & \leq & s^{-1}\norm z\norm{\underrho\parenth{\frac{z}{\norm z}}-\underrho\parenth{\frac{x}{\norm x}}}+s^{-1}\norm{z-x}\norm{\underrho}_{\infty}\\
 & \leq & 2\parenth{\frac{2s}{\norm z}}^{-1}\norm{\underrho\parenth y-\underrho\parenth{\frac{x}{\norm x}}}+s^{-1}s\norm{\underrho}_{\infty}\\
 & < & 2(\beta+2^{-1}\varepsilon)+\norm{\underrho}_{\infty}\\
 & = & 2\beta+\norm{\underrho}_{\infty}+\varepsilon.
\end{eqnarray*}
Since $s\in(0,2^{-1}\norm zR)$ was chosen arbitrarily, we obtain
\[
\sup_{s\in(0,2^{-1}\norm zR)}\parenth{s^{-1}\set{\norm{\rho(z)-\rho(x)}}{x\in\closedball Y(z,s)}}\leq2\beta+\norm{\underrho}_{\infty}+\varepsilon.
\]
Since $\varepsilon>0$ was chosen arbitrarily, we have
\[
\limsup_{r\to0^{+}}\parenth{r^{-1}\set{\norm{\rho(z)-\rho(x)}}{x\in\closedball Y(z,r)}}\leq2\beta+\norm{\underrho}_{\infty}.
\]
Finally, since $z$ was chosen arbitrarily from $\set{\lambda y\in Y}{\lambda>0}$,
we conclude that $\rho$ is pointwise $\parenth{2\beta+\norm{\underrho}_{\infty}}$-Lipschitz
on $\set{\lambda y\in Y}{\lambda>0}$.
\end{proof}

\section{Main result: A Pointwise Lipschitz Selection Theorem\label{sec:Main-result}}

In this section we will prove our Pointwise Lipschitz Selection Theorem
(Theorem~\ref{thm:ptwise-lip-selection-theorem}).

For the sake of brevity and clarity of the proof, the results in this
section is stated under the somewhat synthetic assumption of a correspondence
admitting local strongly pointwise Lipschitz selections. Section~\ref{sec:Lower-pointwise-Lipschitz-correspondences}
introduces a more natural property which we call lower pointwise Lipschitz-ness
which allows for the statement of more natural versions of Theorem~\ref{thm:ptwise-lip-selection-theorem}.
\begin{defn}
\label{def:correspondence-admits-strong-ptwise-lip-selections}Let
$\alpha\geq0$ and $(M,d)$ and $(M',d')$ be metric spaces and let
$b\in M$. Let $\varphi:M\tocorr M'$ be a correspondence. We will
say that $\varphi$ \emph{admits }\textbf{\emph{local}}\emph{ strongly
pointwise $\alpha$-Lipschitz selections at $b$} if, for every $y\in\varphi(b)$,
there exists some open neighborhood $U\subseteq M$ of $b$ and a
continuous selection $f:M\to M'$ of $\varphi$ satisfying $f(b)=y$
with the restriction $\restrict fU:U\to M'$ strongly pointwise $\alpha$-Lipschitz
at $b$ (as defined in Definition~\ref{def:ptwise-lip-and-strongly-ptwise-lip-def}).

We will say $\varphi$ \emph{admits }\textbf{\emph{local}}\emph{ strongly
pointwise $\alpha$-Lipschitz selections, }if it admits local strongly
pointwise $\alpha$-Lipschitz selections at every point of $M$. If
we may choose the neighborhood $U$ as the whole space $M$, we will
omit the `local' modifier, by saying $\varphi$ \emph{admits strongly
pointwise $\alpha$-Lipschitz selections (at $b$).}
\end{defn}

With this definition in hand, we can turn toward establishing our
Pointwise Lipschitz Selection Theorem (Theorem~\ref{thm:ptwise-lip-selection-theorem}).
The proof is somewhat delicate and is split into two parts. We briefly
describe the argument employed:

The first and most technical part is given in the proof of Lemma~\ref{lem:existence-of-successively-more-ptwise-lip-sequence}.
Given a non-empty\textendash{} and convex-valued correspondence that
admits local strongly pointwise Lipschitz selections, we start with
any continuous selection $f_{0}$ of this correspondence. We inductively
construct a sequence of selections $\{f_{n}\}$ of the correspondence
in such a way that, for each $n\in\N$, the selection $f_{n}$ is
pointwise Lipschitz at more points than its predecessor in the sequence
$f_{n-1}$. This is achieved by making subtle adjustments to $f_{n-1}$.
It is necessary to use a delicate hand in the construction of $f_{n}$
from $f_{n-1}$ to ensure that one does not disturb $f_{n-1}$ at
the points where it is already pointwise Lipschitz. We do this by
carefully adjusting a selection in the sequence only at points that
form part of a sequence of successively finer maximal separations
(denoted by $\{B_{n}\}$ in Lemma~\ref{lem:existence-of-successively-more-ptwise-lip-sequence}).
This process yields precise control over the distance from the points
where $f_{n-1}$ is already pointwise Lipschitz and points where it
is safe to adjust $f_{n-1}$. We exploit this control together with
a standard partition of unity argument and the assumption that the
correspondence admits strongly pointwise Lipschitz selections to then
carefully adjust $f_{n-1}$ to form its successor $f_{n}$.

The second part is given in the proof of Theorem~\ref{thm:ptwise-lip-selection-theorem}.
Using a sequence of selections of the correspondence $\{f_{n}\}$,
as obtained from Lemma~\ref{lem:existence-of-successively-more-ptwise-lip-sequence},
it is easily seen that this sequence is uniform Cauchy and hence converges
to a continuous selection of the correspondence. The bulk of the proof
of Theorem~\ref{thm:ptwise-lip-selection-theorem} is a verification
of the properties of the limit of this sequence, in particular that
it is pointwise Lipschitz on a dense set of its domain.
\begin{lem}
\label{lem:existence-of-successively-more-ptwise-lip-sequence}Let
$(M,d)$ be a metric space and $X$ a normed space. Let $\alpha\geq0$
and $\varphi:M\tocorr X$ be a non-empty\textendash{} and convex-valued
correspondence that admits local strongly pointwise $\alpha$-Lipschitz
selections (as defined in Definition~\ref{def:correspondence-admits-strong-ptwise-lip-selections}).
Then, for every $\varepsilon>0$ and any continuous selection $f_{0}$
of $\varphi$, there exists a sequence of continuous functions $\{f_{n}:M\to X\}$
and a sequence of subsets $\{B_{n}\}$ of $M$ such that, for every
$n\in\N$:
\begin{enumerate}
\item The set $B_{n}$ is a maximal $2^{-(n-1)}$-separation in $M$ with
$B_{n-1}\subseteq B_{n}$ (where we take $B_{0}:=\emptyset$).\medskip
\item The function $f_{n}$ is a continuous selection of $\varphi$. If
$f_{0}$ is bounded, then so is $f_{n}.$\medskip
\item We have $\sup_{a\in M}\norm{f_{n}(a)-f_{n-1}(a)}\leq2^{-n}\varepsilon$.
\medskip
\item The function $f_{n}$ is pointwise $\alpha$-Lipschitz at every point
of $B_{n}$.\medskip
\item For every $b\in B_{n}\backslash B_{n-1}$ there exists a number $\delta_{b}^{(n)}>0$
so that, for every $a\in\openball M(b,\delta_{b}^{(n)})$, we have
$\norm{f_{n}(b)-f_{n}(a)}\leq\alpha d(b,a)$.\medskip
\item For any $k\in\{1,\ldots,n-1\}$ and $b\in B_{k}$, the function $f_{n}$
coincides with all the functions $f_{n-1},\ldots,f_{k}$ on $\openball M(b,2^{-n})$,
that is,
\[
\restrict{f_{n}}{\openball M(b,2^{-n})}=\restrict{f_{n-1}}{\openball M(b,2^{-n})}=\ldots=\restrict{f_{k}}{\openball M(b,2^{-n})}.
\]

\end{enumerate}
\end{lem}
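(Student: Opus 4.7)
\emph{Proof proposal.} We plan to construct the sequences $\{f_n\}$ and $\{B_n\}$ by induction on $n$, taking $B_0 := \emptyset$ and $f_0$ the given selection as the base case. For the inductive step, assume $f_0, \ldots, f_{n-1}$ and $B_0 \subseteq \cdots \subseteq B_{n-1}$ have been built satisfying (1)--(6). First, Lemma~\ref{lem:maximal-r-separations-containing-another-exist} extends $B_{n-1}$ to a maximal $2^{-(n-1)}$-separation $B_n$, giving (1). The idea for $f_n$ is then to modify $f_{n-1}$ only inside tiny, pairwise disjoint balls around the newly added points of $B_n \setminus B_{n-1}$, replacing $f_{n-1}$ there by a convex blend with a locally strongly $\alpha$-Lipschitz selection through the new point.

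For each $b \in B_n \setminus B_{n-1}$, Definition~\ref{def:correspondence-admits-strong-ptwise-lip-selections} (applied to $f_{n-1}(b) \in \varphi(b)$) yields a continuous selection $g_b$ of $\varphi$ with $g_b(b) = f_{n-1}(b)$ and an open neighborhood $U_b$ of $b$ on which $g_b$ is strongly pointwise $\alpha$-Lipschitz at $b$. Using $g_b(b) = f_{n-1}(b)$ and continuity of both functions, choose $r_b > 0$ with (a) $\closedball M(b, r_b) \subseteq U_b$, (b) $r_b \leq 2^{-(n+1)}$, and (c) $\norm{g_b(a) - f_{n-1}(a)} \leq 2^{-n}\varepsilon$ for every $a \in \closedball M(b, r_b)$. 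Bound (b) combined with the $2^{-(n-1)}$-separation of $B_n$ makes the balls $\closedball M(b, r_b)$ pairwise disjoint across $b \in B_n \setminus B_{n-1}$. Introduce continuous cutoffs $\theta_b : M \to [0,1]$, e.g.\ $\theta_b(a) := \max\{0, \min\{1, 2 - 2d(a,b)/r_b\}\}$, equal to $1$ on $\closedball M(b, r_b/2)$ and vanishing outside $\closedball M(b, r_b)$, and set
\[
f_n(a) := (1 - \theta_b(a)) f_{n-1}(a) + \theta_b(a) g_b(a) \quad \text{if } a \in \closedball M(b, r_b),
\]
and $f_n(a) := f_{n-1}(a)$ otherwise. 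Since each value is a convex combination inside $\varphi(a)$, the selection part of (2) follows from convex-valuedness; (3) is immediate from (c), which in turn gives the boundedness half of (2) by induction. Property (5) holds with $\delta_b^{(n)} := r_b/2$, since $f_n \equiv g_b$ on $\closedball M(b, r_b/2) \subseteq U_b$. For (6), bound (b) forces $\closedball M(b, r_b) \cap \openball M(b', 2^{-n}) = \emptyset$ for every $b' \in B_{n-1}$ (else $d(b,b') < r_b + 2^{-n} < 2^{-(n-1)}$, contradicting separation), so $f_n$ agrees with $f_{n-1}$ on each such neighborhood; the full chain for $k \leq n-1$ follows from the inductive version of (6). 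Property (4) at points of $B_{n-1}$ is then inherited from $f_{n-1}$ via (6) and Lemma~\ref{lem:pt-wise-lip-the-same-for-closed-or-open-balls}, while (4) at new points of $B_n$ follows from (5).

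The main obstacle is continuity of $f_n$ at a point $a$ outside every perturbation ball, since the index set $B_n \setminus B_{n-1}$ may be uncountable and the radii $r_b$ may accumulate at zero. This is where the $2^{-(n-1)}$-separation of $B_n$ is crucial: given $a_k \to a$ with $a_k \in \closedball M(b_k, r_{b_k})$, one has $d(b_k, a) \leq r_{b_k} + d(a_k, a) < 2^{-n}$ for $k$ large, so each such $b_k$ lies in $\openball M(a, 2^{-n})$, a ball of diameter strictly less than $2^{-(n-1)}$; by separation of $B_n$ this ball contains at most one point of $B_n$, so the $b_k$ are eventually a single $b$, and closedness of $\closedball M(b, r_b)$ forces $a \in \closedball M(b, r_b)$, contrary to assumption. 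Thus $a_k$ is eventually outside every perturbation ball and $f_n(a_k) = f_{n-1}(a_k) \to f_{n-1}(a) = f_n(a)$. Continuity at points strictly inside or on the boundary of a single perturbation ball is immediate from the formula and from $\theta_b$ being continuous and vanishing on that boundary, which completes the inductive step.
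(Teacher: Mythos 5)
Your proposal is correct, and its overall skeleton is the same as the paper's: extend $B_{n-1}$ to a maximal $2^{-(n-1)}$-separation, use the local strongly pointwise $\alpha$-Lipschitz selection $g_b$ through $f_{n-1}(b)$ at each new point $b$, shrink the perturbation radius so that the balls are pairwise disjoint, closer than $2^{-n}\varepsilon$ to $f_{n-1}$, and too small to reach the $2^{-n}$-neighborhoods of earlier separation points, then blend $f_{n-1}$ and $g_b$ convexly so that $f_n\equiv g_b$ near $b$ and $f_n\equiv f_{n-1}$ away from the new points; properties (2)--(6) are then verified exactly as you do. Where you genuinely diverge is in the gluing step: the paper invokes Stone's Theorem and takes a locally finite partition of unity subordinate to the cover $\cal U\cup\{M\setminus\cup\cal C\}$ (together with a cited fact that $\cup\cal C$ is closed), whereas you build explicit tent cutoffs $\theta_b$ and prove continuity by hand, using the observation that the centers are $2^{-(n-1)}$-separated while the radii are at most $2^{-(n+1)}$, so any point has a neighborhood meeting at most one perturbation ball. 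Your route is more elementary and self-contained (no paracompactness or partition-of-unity machinery), at the cost of doing the local-finiteness/continuity bookkeeping explicitly; the paper's route outsources exactly that bookkeeping to standard machinery. One small point worth making explicit in your write-up: continuity at points inside or on the boundary of a perturbation ball also relies on the uniform gap between distinct balls (distinct closed balls are at distance at least $2^{-(n-1)}-2\cdot2^{-(n+1)}=2^{-n}$ from each other), so that near such a point the formula involves only the single cutoff $\theta_b$; this follows from the same separation estimate you already use for points outside all balls, so it is a matter of stating it, not a gap.
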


\begin{proof}
Let $\varepsilon>0$ be arbitrary. Let $B_{0}:=\emptyset$ and let
$f_{0}:M\to X$ be a continuous selection of $\varphi$.

We proceed inductively. Let $n\in\N$ be arbitrary, and assume that
the functions $f_{1},\ldots f_{n-1}$ and the sets $B_{1},\ldots,B_{n-1}$ have been
defined to satisfy (1)-(6) in the statement of the result. We will
construct $f_{n}:M\to X$ and $B_{n}$.

Firstly, by Lemma~\ref{lem:maximal-r-separations-containing-another-exist},
there exists a maximal $2^{-(n-1)}$-separation in $M$, denoted by
$B_{n}$, satisfying $B_{n-1}\subseteq B_{n}$.

For every $b\in B_{n}\backslash B_{n-1}$, by our assumption of $\varphi$
admitting local strongly pointwise $\alpha$-Lipschitz selections,
there exists some $r_{b}>0$ and a continuous selection $g_{b}:M\to X$
of $\varphi$ satisfying $g_{b}(b)=f_{n-1}(b)$ with the restriction
$\restrict{g_{b}}{\openball M(b,r_{b})}$ strongly pointwise $\alpha$-Lipschitz
at $b$. In particular, $g_{b}$ is pointwise $\alpha$-Lipschitz
at $b$. The map $M\ni a\mapsto\norm{f_{n-1}(a)-g_{b}(a)}$ is continuous,
hence there exists some $\delta_{b}^{(n)}>0$ with $\delta_{b}^{(n)}<\min\{2^{-(n+1)},r_{b}\}$
such that, if $a\in\openball M(b,2\delta_{b}^{(n)})$, then
\[
\abs{\vphantom{\sum}\norm{f_{n-1}(a)-g_{b}(a)}-\norm{f_{n-1}(b)-g_{b}(b)}}<2^{-n}\varepsilon.
\]
But, since $f_{n-1}(b)=g_{b}(b)$, we have $\norm{f_{n-1}(a)-g_{b}(a)}<2^{-n}\varepsilon$
for every $a\in\openball M(b,2\delta_{b}^{(n)})$.

We define the collections
\[
\cal U:=\set{\openball M(b,2\delta_{b}^{(n)})}{b\in B_{n}\backslash B_{n-1}}\ \mbox{and}\ \cal C:=\set{\closedball M(b,\delta_{b}^{(n)})}{b\in B_{n}\backslash B_{n-1}}.
\]
Since $B_{n}$ is a $2^{-(n-1)}$-separation in $M$ and, for every
$b\in B_{n}\backslash B_{n-1}$, we have $\delta_{b}^{(n)}<2^{-(n+1)}$,
the elements of $\cal U$ are pairwise disjoint. Similarly, the elements
of $\cal C$ are pairwise disjoint. Furthermore, it can be seen that
$\cup\cal C$ is closed (see \cite[III.9.2]{Dugundji}).

We define $\cal V:=\cal U\cup\{M\backslash\cup\cal C\}$, which is
an open cover of $M$. Since $M$ is paracompact (cf. Theorem~\ref{thm:stone's-theorem}),
there exists a locally finite partition of unity $\set{\rho_{V}}{V\in\cal V}$
subordinate to $\cal V$ \cite[Theorem~4.2, p.170]{Dugundji}. For
$V\in\cal V$, if $V=M\backslash\cup\cal C$, we define $h_{V}:=f_{n-1}$.
If $V=\openball M(b,2\delta_{b}^{(n)})$ for some $b\in B_{n}\backslash B_{n-1}$,
we define $h_{V}:=g_{b}$. Finally, we define $f_{n}:M\to X$ as
\[
f_{n}(a):=\sum_{V\in\cal V}\rho_{V}(a)h_{V}(a)\quad(a\in M).
\]
Since $\varphi$ is convex-valued, and, for every $V\in\cal V$, the
function $h_{V}$ is a continuous selection of $\varphi$, we have
that $f_{n}$ is a continuous selection of $\varphi$.

We claim that $\sup_{a\in M}\norm{f_{n-1}(a)-f_{n}(a)}\leq2^{-n}\varepsilon$.
Let $a\in M$ be arbitrary. We distinguish two cases: Firstly, if
$a\notin\cup\cal U$, then $a\in M\backslash\cup\cal C$ and therefore
$\norm{f_{n-1}(a)-f_{n}(a)}=0$. Secondly, if $a\in\cup\cal U$, then,
since the elements of $\cal U$ are disjoint, there exists a unique
$b\in B_{n}\backslash B_{n-1}$ so that $a\in\openball M(b,2\delta_{b}^{(n)})=:V$
and $h_{V}=g_{b}$. Then, with $W:=M\backslash\cup\cal C$, we have
$h_{W}=f_{n-1}$. By definition of $\delta_{b}^{(n)}$, we see that
\begin{eqnarray*}
\norm{f_{n-1}(a)-f_{n}(a)} & = & \norm{f_{n-1}(a)-\sum_{U\in\{V,W\}}\rho_{U}(a)h_{U}(a)}\\
 & \leq & \abs{\rho_{V}(a)}\norm{f_{n-1}(a)-h_{V}(a)}\\
 &  & \quad\quad+\abs{\rho_{W}(a)}\norm{f_{n-1}(a)-h_{W}(a)}\\
 & \leq & \norm{f_{n-1}(a)-g_{b}(a)}\\
 & < & 2^{-n}\varepsilon,
\end{eqnarray*}
establishing our claim that $\sup_{a\in M}\norm{f_{n-1}(a)-f_{n}(a)}\leq2^{-n}\varepsilon$.
If $f_{0}$ is bounded, then $f_{n-1}$ is bounded by assumption,
and hence it is clear that $f_{n}$ is also bounded.

We notice, by construction, for every $b\in B_{n}\backslash B_{n-1}$
we have $\openball M(b,\delta_{b}^{(n)})\cap(M\backslash\cup\cal C)=\emptyset$.
Therefore $\restrict{f_{n}}{\openball M(b,\delta_{b}^{(n)})}=\restrict{g_{b}}{\openball M(b,\delta_{b}^{(n)})}.$
Because $\delta_{b}^{(n)}<r_{b}$, the restriction $\restrict{g_{b}}{\openball M(b,\delta_{b}^{(n)})}$
is strongly pointwise $\alpha$-Lipschitz at $b$, and hence, the
map $f_{n}$ is pointwise $\alpha$-Lipschitz at $b$.

Again by construction, for any $k\in\{1,\ldots,n-1\}$ and $b\in B_{k}\subseteq B_{n-1}$,
we have $\openball M(b,2^{-n})\cap(\cup\cal U)=\emptyset$ so that
$\restrict{f_{n}}{\openball M(b,2^{-n})}=\restrict{f_{n-1}}{\openball M(b,2^{-n})}.$
Since $f_{n-1}$ was assumed to be pointwise $\alpha$-Lipschitz at
$b$, so is $f_{n}$. Furthermore, if $k<n-1$, by our initial assumption,
we have
\[
\restrict{f_{n-1}}{\openball M(b,2^{-(n-1)})}=\restrict{f_{n-2}}{\openball M(b,2^{-(n-1)})}=\ldots=\restrict{f_{k}}{\openball M(b,2^{-(n-1)})},
\]
and since $\openball M(b,2^{-n})\subseteq\openball M(b,2^{-(n-1)})$,
we conclude
\[
\restrict{f_{n}}{\openball M(b,2^{-n})}=\restrict{f_{n-1}}{\openball M(b,2^{-n})}=\ldots=\restrict{f_{k}}{\openball M(b,2^{-n})}.\qedhere
\]
\end{proof}
\begin{rem}
\label{rem:sequence-eventually-constant}With $\{f_{n}\}$ and $\{B_{n}\}$
as constructed in the previous lemma, we note, for every $b\in\bigcup_{n\in\N}B_{n}$,
the sequence $\{f_{n}(b)\}\subseteq X$ is eventually constant. Specifically,
if for some $n\in\N$, we have $b\in B_{n}$, then $f_{m}(b)=f_{n}(b)$
for all $m\geq n$. We will use this fact in the proof of Theorem~\ref{thm:ptwise-lip-selection-theorem}.
\end{rem}

\begin{thm}[Pointwise Lipschitz Selection Theorem]
\label{thm:ptwise-lip-selection-theorem}Let $(M,d)$ be a metric
space and $X$ a Banach space. Let $\alpha\geq0$ and let $\varphi:M\tocorr X$
be a non-empty\textendash , closed\textendash , and convex-valued
correspondence that admits local strongly pointwise $\alpha$-Lipschitz
selections (as defined in Definition~\ref{def:correspondence-admits-strong-ptwise-lip-selections}).
If $\varphi$ has a (bounded) continuous selection, then, for every
$\beta>\alpha$, there exists a (bounded) continuous selection of
$\varphi$ that is pointwise $\beta$-Lipschitz (as defined in Definition~\ref{def:ptwise-lip-and-strongly-ptwise-lip-def})
on a dense set of $M$.
\end{thm}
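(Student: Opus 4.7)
The plan is to invoke Lemma~\ref{lem:existence-of-successively-more-ptwise-lip-sequence} with a sufficiently small $\varepsilon>0$ and to show that the resulting sequence of continuous selections converges uniformly to a selection of $\varphi$ that is pointwise $\beta$-Lipschitz on the dense union of the produced separations. Fix $\beta>\alpha$ and set $\varepsilon:=(\beta-\alpha)/8$ (any value in $(0,(\beta-\alpha)/4)$ will do). Starting from the given (bounded) continuous selection $f_{0}$, apply the lemma to obtain sequences $\{f_{n}\}$ and $\{B_{n}\}$ satisfying properties (1)--(6). First I would observe that property (3) makes $\{f_{n}\}$ a uniform Cauchy sequence with limiting estimate $\norm{f-f_{n}}_{\infty}\leq 2^{-n}\varepsilon$; since $X$ is Banach, the limit $f$ exists and is continuous. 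Closedness of each $\varphi(a)$ then forces $f(a)\in\varphi(a)$, and boundedness of $f$ in the bounded case is inherited from property (2) together with the uniform convergence.

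Next I would show that $f$ is pointwise $\beta$-Lipschitz on the set $B:=\bigcup_{n\in\N}B_{n}$, which is dense in $M$ by Lemma~\ref{lem:union-of-nested-maximal-r-separations-is-dense} applied to the radii $\{2^{-(n-1)}\}$. Given $b\in B$, let $N$ be the least index with $b\in B_{N}$, so that $b\in B_{N}\backslash B_{N-1}$; property (5) then furnishes $\delta:=\delta_{b}^{(N)}>0$ with $\norm{f_{N}(b)-f_{N}(a)}\leq\alpha d(b,a)$ on $\openball M(b,\delta)$, while property (6) guarantees $f_{n}=f_{N}$ on $\openball M(b,2^{-n})$ for every $n>N$. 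The key step is then, for a given small radius $r>0$ with $r<\delta$, to choose $n\geq N$ with $r<2^{-n}\leq 2r$; for any $a\in\closedball M(b,r)$ both inclusions hold, giving $\norm{f_{n}(b)-f_{n}(a)}=\norm{f_{N}(b)-f_{N}(a)}\leq\alpha r$, which together with the uniform estimate $\norm{f-f_{n}}_{\infty}\leq 2r\varepsilon$ and the triangle inequality yields $\norm{f(b)-f(a)}\leq(\alpha+4\varepsilon)r<\beta r$. Taking $\limsup_{r\to 0^{+}}$ then establishes pointwise $\beta$-Lipschitz-ness at $b$.

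The main obstacle to anticipate is the tension between the two distinct scales at play: property (5) supplies a strong pointwise $\alpha$-Lipschitz bound at $b$, but only on a fixed ball of radius $\delta_{b}^{(N)}$, while the identity $f_{n}=f_{N}$ from property (6) holds only on a much smaller ball of radius $2^{-n}$, which shrinks with $n$. The $n$-dependent choice of index as a function of $r$ above is what makes these two constraints simultaneously satisfiable; it also forces the uniform approximation error to scale with $r$ rather than being absorbed as a constant independent of $r$, which is precisely why the geometric decay rate $2^{-n}\varepsilon$ (not merely summability) in property (3) is essential to close the argument and to eventually bring the limiting pointwise Lipschitz constant below the prescribed $\beta$.
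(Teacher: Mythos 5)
Your proposal is correct, and it follows the same skeleton as the paper: invoke Lemma~\ref{lem:existence-of-successively-more-ptwise-lip-sequence}, pass to the uniform limit $f$, and verify pointwise $\beta$-Lipschitz-ness on the dense union $B=\bigcup_{n}B_{n}$. Where you genuinely diverge is in the verification at a point $b\in B_{N}\setminus B_{N-1}$: the paper does not use the tail bound $\norm{f-f_{n}}_{\infty}\leq2^{-n}\varepsilon$ at an arbitrary nearby point $y$; instead it moves from $y$ to an auxiliary point $c\in B$ with $\norm{f(y)-f(c)}<r\varepsilon$ (continuity of $f$ plus density of $B$), uses that $f(b)=f_{n}(b)$ and $f(c)=f_{m}(c)$ are eventually attained values (Remark~\ref{rem:sequence-eventually-constant}), telescopes $f_{m}(c)$ back to $f_{n}(c)$ using property (3), and finally converts open balls to closed balls via Lemma~\ref{lem:pt-wise-lip-the-same-for-closed-or-open-balls}. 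Your route---choosing $n$ with $2^{-(n+1)}\leq r<2^{-n}$ so that the uniform error $\norm{f-f_{n}}_{\infty}\leq2^{-n}\varepsilon\leq2r\varepsilon$ scales with $r$, and combining properties (5) and (6) at that single index---reaches the bound $(\alpha+4\varepsilon)r$ more directly, dispensing with the auxiliary point, Remark~\ref{rem:sequence-eventually-constant}, and Lemma~\ref{lem:pt-wise-lip-the-same-for-closed-or-open-balls}, while exploiting the geometric decay in property (3) in the same essential way the paper does (there via $r\geq2^{-(n+1)}$). The only detail to make explicit is that ``$r$ small'' must include $r<2^{-N}$ in addition to $r<\delta_{b}^{(N)}$, so that the index $n$ determined by $2^{-(n+1)}\leq r<2^{-n}$ satisfies $n\geq N$, and that for $n=N$ the identity $f_{n}=f_{N}$ on $\openball M(b,2^{-n})$ is trivial rather than an instance of property (6); since you take $\limsup_{r\to0^{+}}$, this costs nothing.
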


\begin{proof}
Let $f_{0}:M\to X$ be a continuous selection of $\varphi$ and let
$\beta>\alpha$ be arbitrary. Define $\varepsilon:=3^{-1}(\beta-\alpha)>0$,
and with this $\varepsilon$ and $f_{0}$, let $\{f_{n}\}$ and $\{B_{n}\}$
be as obtained from Lemma~\ref{lem:existence-of-successively-more-ptwise-lip-sequence}.
Since the family of sequences $\{f_{n}(a)\}\subseteq X$ are uniformly
Cauchy with respect to $a\in M$, a standard exercise shows that the
pointwise limit $f:M\to X$ defined by $f(a):=\lim_{n\to\infty}f_{n}(a)$
for all $a\in M$ is continuous. Since $\varphi$ is closed-valued,
and each $f_{n}$ is a continuous selection of $\varphi,$ the limit
$f$ is also a continuous selection of $\varphi$. If $f_{0}$ is
bounded, the sequence $\{f_{n}\}$ is Cauchy in the Banach space $C(M,X)$,
and hence the limit $f$ is also bounded.

We let $B:=\bigcup_{n\in\N}B_{n}$, and by Lemma~\ref{lem:union-of-nested-maximal-r-separations-is-dense},
the set $B$ is dense in $M$. We claim that $f$ is pointwise $\beta$-Lipschitz
on $B$.

Let $b\in B$ be arbitrary. Let $N\in\N$ be the least number such
that $b\in B_{N}$. With $\delta_{b}^{(N)}>0$ as obtained from Lemma~\ref{lem:existence-of-successively-more-ptwise-lip-sequence},
let $K\in\N$ be the least number satisfying $K\geq N$ and $2^{-K}<\delta_{b}^{(N)}$.
Let $r\in(0,2^{-K})$ be arbitrary, and let $n\in\N$ be such that
$n\geq K$ and $r\in[2^{-(n+1)},2^{-n})$. Let $y\in\openball M(b,r)$
be arbitrary. If $y=b$, then $r^{-1}\norm{f(b)-f(y)}=0$. On the
other hand, if $y\neq b$, since $\openball M(b,r)$ is open and $f$
is continuous, there exists some $\delta>0$ such that both $\openball M(y,\delta)\subseteq\openball M(b,r)$,
and $\norm{f(y)-f(x)}<r\varepsilon$ for all $x\in\openball M(y,\delta)$.
By the density of $B$ in $M$, there exists some $c\in\openball M(y,\delta)\cap B$.
Let $m\geq n$ be such that $c\in B_{m}$.

Now, by construction of the sequence $\{f_{n}\}$ in Lemma~\ref{lem:existence-of-successively-more-ptwise-lip-sequence}
(cf. Remark~\ref{rem:sequence-eventually-constant}), we have $f(b)=f_{n}(b)$
and $f(c)=f_{m}(c)$, and
\[
\restrict{f_{n}}{\openball M(b,2^{-n})}=\restrict{f_{n-1}}{\openball M(b,2^{-n})}=\ldots=\restrict{f_{N}}{\openball M(b,2^{-n})}.
\]
Furthermore, since
\[
c\in\openball M(y,\delta)\subseteq\openball M(b,r)\subseteq\openball M(b,2^{-n})\subseteq\openball M(b,\delta_{b}^{(N)}),
\]
again by Lemma~\ref{lem:existence-of-successively-more-ptwise-lip-sequence},
we have $\norm{f_{N}(b)-f_{N}(c)}\leq\alpha d(b,c)<\alpha r$. Finally,
keeping in mind that $\sup_{a\in M}\norm{f_{j}(a)-f_{j-1}(a)}<2^{-j}\varepsilon$
for all $j\in\N$, and that $r\geq2^{-(n+1)}$, we obtain
\begin{eqnarray*}
& &\mkern-72mu r^{-1}\norm{f(b)-f(y)} \\
 & \leq & r^{-1}\norm{f(b)-f(c)}+r^{-1}\norm{f(c)-f(y)}\\
 & = & r^{-1}\norm{f(b)-f(c)}+r^{-1}r\varepsilon\\
 & = & r^{-1}\norm{f_{n}(b)-f_{m}(c)}+\varepsilon\\
 & \leq & r^{-1}\norm{f_{n}(b)-f_{n}(c)}+\parenth{r^{-1}\sum_{j=n+1}^{m}\norm{f_{j-1}(c)-f_{j}(c)}}+\varepsilon\\
 & \leq & r^{-1}\norm{f_{n}(b)-f_{n}(c)}+\parenth{r^{-1}\sum_{j=n+1}^{m}2^{-j}\varepsilon}+\varepsilon\\
 & < & r^{-1}\norm{f_{n}(b)-f_{n}(c)}+2^{n+1}2^{-n}\varepsilon+\varepsilon\\
 & = & r^{-1}\norm{f_{N}(b)-f_{N}(c)}+3\varepsilon\\
 & \leq & r^{-1}\alpha d(b,c)+3\varepsilon\\
 & < & r^{-1}r\alpha+3\varepsilon\\
 & = & \alpha+\beta-\alpha.\\
 & = & \beta.
\end{eqnarray*}

Since $y\in\openball M(b,r)$ was chosen arbitrarily, we have
\[
r^{-1}\sup\set{\norm{f(b)-f(y)}}{y\in\openball M(b,r)}\leq\beta.
\]
But $r\in(0,2^{-K})$ was also chosen arbitrarily, and therefore
\[
\limsup_{r\to0^{+}}\parenth{r^{-1}\sup\set{\norm{f(b)-f(y)}}{y\in\openball M(b,r)}}\leq\beta.
\]
By Lemma~\ref{lem:pt-wise-lip-the-same-for-closed-or-open-balls},
the function $f$ is pointwise $\beta$-Lipschitz at $b\in B$. Finally,
since $b\in B$ was chosen arbitrarily, we conclude that $f$ is pointwise
$\beta$-Lipschitz on $B$ which is dense in $M$.
\end{proof}
\begin{rem}
\label{rem:cantor-ternary}We point out that a function that is pointwise
$\alpha$-Lipschitz for some $\alpha>0$ on a dense set of its domain
is not necessarily pointwise $\beta$-Lipschitz for some $\beta>0$
on the whole of its domain. The Cantor function \cite[Exercise 1.6.48]{TaoMeasureTheory},
which maps $[0,1]$ to $[0,1]$, is pointwise $0$-Lipschitz on the
complement of the Cantor set (which is dense in $[0,1]$), while it
is not pointwise $\alpha$-Lipschitz for any $\alpha>0$ on the whole
interval. An easy way to see this is to apply Theorem~\ref{thm:durant-cartagena-jaramillo},
noting that the Cantor function is not Lipschitz, while $[0,1]$ is
a length space (cf. \cite[Definition~2.1.6]{BuragoBuragoIvanov}).
\end{rem}

\section{Lower pointwise Lipschitz correspondences \label{sec:Lower-pointwise-Lipschitz-correspondences}}

The condition of a correspondence admitting local strongly pointwise
Lipschitz selections in the hypothesis Theorem~\ref{thm:ptwise-lip-selection-theorem}
is admittedly somewhat synthetic. In this section we will show that
a more natural condition, which we call ``lower pointwise Lipschitz-ness''
of a correspondence, is a sufficient condition for a correspondence
to admit local strongly pointwise Lipschitz selections.
\begin{defn}
\label{def:lower-ptwise-lip}Let $(M,d)$ be a metric space, $X$
a normed space, and $\alpha\geq0$. A correspondence $\varphi:M\tocorr X$
will be said to be \emph{\lowerptlip{\alpha} at $b\in M$}, if, for
every $y\in\varphi(b)$ and $a\in M$, the set
\[
\varphi(a)\cap\parenth{y+\alpha d(b,a)\closedball X}
\]
is non-empty. We will say that $\varphi$ is \emph{\lowerptlip{\alpha}}
if it is \lowerptlip{\alpha} at every point of $M$.
\end{defn}

The following few results are fairly straightforward in nature, if
somewhat technical. Our aim is to prove Proposition~\ref{prop:lower-(alpha+eps)-Lip-implies-admits-strong-pw-lip-selections}
which gives sufficient conditions for a correspondence to admit strongly
pointwise Lipschitz selections. The bulk of the work is done in Proposition~\ref{prop:lower-pt-lip-implies-lower-hemicontinuous-of-psi}
which establishes the lower hemicontinuity of a certain correspondence
derived from one that is assumed to be lower pointwise Lipschitz.
A straightforward application of Michael's Selection Theorem will
then establish Proposition~\ref{prop:lower-(alpha+eps)-Lip-implies-admits-strong-pw-lip-selections}.
\begin{lem}
\label{lem:convex-intersection-withball-nonempty}Let $X$ be a normed
space. Let $\alpha\geq0$, $x\in X$ and $G\subseteq X$ be a convex
set such that, for every $\varepsilon>0$, the set $G\cap(x+(\alpha+\varepsilon)\closedball X)$
is non-empty. If, for an open set $U\subseteq X$ and some $\varepsilon_{0}>0$,
the set $G\cap(x+(\alpha+\varepsilon_{0})\closedball X)\cap U$ is
non-empty, then $G\cap(x+(\alpha+\varepsilon_{0})\openball X)\cap U$
is also non-empty.
\end{lem}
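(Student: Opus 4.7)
The plan is to take any witness $g_{0} \in G \cap (x + (\alpha+\varepsilon_0)\closedball{X}) \cap U$ supplied by the hypothesis, and to push it slightly towards a point with a \emph{strictly} smaller distance to $x$ via a convex combination. Since we only know $\norm{g_{0}-x}\leq\alpha+\varepsilon_0$, we cannot conclude directly that $g_0$ lies in the open ball, so we need a genuine perturbation that preserves both membership in $G$ (handled by convexity) and membership in $U$ (handled by openness).

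First I would apply the hypothesis with any $\varepsilon \in (0,\varepsilon_0)$, say $\varepsilon := \varepsilon_0/2$, to obtain some $g_{1}\in G\cap\bigl(x+(\alpha+\varepsilon_0/2)\closedball{X}\bigr)$. Then, for each $t\in[0,1]$, I would set
\[
g_{t} := (1-t)g_{0} + t g_{1},
\]
which lies in $G$ by convexity. The triangle inequality gives
\[
\norm{g_{t}-x} \leq (1-t)\norm{g_{0}-x} + t\norm{g_{1}-x} \leq (1-t)(\alpha+\varepsilon_0) + t(\alpha+\varepsilon_0/2) = \alpha+\varepsilon_0 - t\varepsilon_0/2,
\]
so $\norm{g_{t}-x} < \alpha+\varepsilon_0$ for every $t\in(0,1]$; that is, $g_{t}\in x+(\alpha+\varepsilon_0)\openball{X}$.

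Finally, I would use openness of $U$: since $g_{0}\in U$ and the map $t\mapsto g_{t}$ is continuous on $[0,1]$ with $g_{0}$ at $t=0$, there exists some $t^{*}\in(0,1]$ with $g_{t^{*}}\in U$. This $g_{t^{*}}$ then lies in $G\cap\bigl(x+(\alpha+\varepsilon_0)\openball{X}\bigr)\cap U$, as required. I do not anticipate any real obstacle; the only subtle point is remembering to invoke the hypothesis with a strictly smaller parameter $\varepsilon < \varepsilon_0$ so that the convex-combination estimate yields a strict inequality for every positive $t$.
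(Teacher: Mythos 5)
Your proposal is correct and follows essentially the same route as the paper: pick a second point within distance $\alpha+\varepsilon_{0}/2$ of $x$, form the convex combination, estimate with the triangle inequality to land strictly inside the ball for all positive $t$, and use continuity plus openness of $U$ to keep membership in $U$ for small $t$. The only cosmetic difference is that the paper first disposes of the case where the witness already lies in the open ball, a case split your uniform estimate makes unnecessary.
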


\begin{proof}
Let $U\subseteq X$ be open and $\varepsilon_{0}>0$ such that $G\cap(x+(\alpha+\varepsilon_{0})\closedball X)\cap U\neq\emptyset$.
Let $y\in G\cap(x+(\alpha+\varepsilon_{0})\closedball X)\cap U$.
If $y\in x+(\alpha+\varepsilon_{0})\openball X$, then we are done.
We therefore assume that $y\in x+(\alpha+\varepsilon_{0})\sphere X$.
Let $z\in G\cap(x+(\alpha+2^{-1}\varepsilon_{0})\closedball X)\neq\emptyset$.
Then, for every $t\in(0,1]$,
\begin{eqnarray*}
\norm{tz+(1-t)y-x} & = & \norm{tz+(1-t)y-tx-(1-t)x}\\
 & \leq & t\norm{z-x}+(1-t)\norm{y-x}\\
 & \leq & t(\alpha+2^{-1}\varepsilon_{0})+(1-t)(\alpha+\varepsilon_{0})\\
 & < & t(\alpha+\varepsilon_{0})+(1-t)(\alpha+\varepsilon_{0})\\
 & = & (\alpha+\varepsilon_{0}).
\end{eqnarray*}
In other words, $tz+(1-t)y\in x+(\alpha+\varepsilon_{0})\openball X$
for all $t\in(0,1]$. Since $[0,1]\ni t\mapsto tz+(1-t)y$ is continuous,
there exists some $t_{0}\in(0,1]$ such that $t_{0}z+(1-t_{0})y\in U$.
Since $G$ is convex, $t_{0}z+(1-t_{0})y\in G$. We conclude $t_{0}z+(1-t_{0})y\in G\cap(x+(\alpha+\varepsilon_{0})\openball X)\cap U$.
\end{proof}
\begin{prop}
\label{prop:lower-pt-lip-implies-lower-hemicontinuous-of-psi}Let
$(M,d)$ be a metric space, $X$ a normed space and $\alpha\geq0$.
Let $a\in M$ and let $\varphi:M\tocorr X$ be a convex-valued lower
hemicontinuous correspondence that is \lowerptlip{(\alpha+\varepsilon)}
at $a\in M$ for every $\varepsilon>0$. Then, for every $y\in\varphi(a)$
and $\varepsilon>0$, the correspondence $\psi:M\tocorr X$, defined
by
\[
\psi(b):=\varphi(b)\cap\parenth{y+(\alpha+\varepsilon)d(a,b)\closedball X}\quad(b\in M),
\]
is lower hemicontinuous. Moreover, $\psi$ is convex\textendash{}
and non-empty-valued.
\end{prop}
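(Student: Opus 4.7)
The plan is to verify, in order, non-emptiness, convexity, and lower hemicontinuity of $\psi$. Non-emptiness of $\psi(b)$ is immediate from the lower pointwise $(\alpha+\varepsilon)$-Lipschitz hypothesis at $a$ applied with the same $\varepsilon$ used to define $\psi$, and convexity of $\psi(b)$ follows from the convexity of $\varphi(b)$ together with that of the closed ball. The bulk of the work lies in establishing lower hemicontinuity of $\psi$ at an arbitrary $b \in M$: given an open $U \subseteq X$ with $\psi(b) \cap U \neq \emptyset$, I must produce an open neighbourhood $V$ of $b$ with $\psi(v) \cap U \neq \emptyset$ for all $v \in V$.

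I would split into two cases. When $b=a$, the set $\psi(a)=\{y\}$ forces $y\in U$, so there exists $\eta>0$ with $y+\eta\openball X\subseteq U$. For any $v$ in a sufficiently small open ball about $a$, lower pointwise $(\alpha+\varepsilon/2)$-Lipschitz-ness at $a$ supplies a point of $\varphi(v)$ within $(\alpha+\varepsilon/2)d(a,v)$ of $y$; this point lies in $U$ (by smallness of $d(a,v)$) and in $y+(\alpha+\varepsilon)d(a,v)\closedball X$ (since $\varepsilon/2<\varepsilon$), hence in $\psi(v)\cap U$.

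The principal obstacle is the case $b \neq a$: the witnessing point of $\psi(b)\cap U$ may lie on the boundary sphere $y+(\alpha+\varepsilon)d(a,b)\sphere X$, leaving no slack to absorb perturbations in $v$ or in the correspondence value. The key step here is to invoke Lemma~\ref{lem:convex-intersection-withball-nonempty} with $G:=\varphi(b)$, $x:=y$, the scalar $\alpha d(a,b)$ in place of the lemma's $\alpha$, and $\varepsilon_0:=\varepsilon d(a,b)$; the hypothesis of that lemma is verified by observing that for every $\delta>0$ the lower pointwise $(\alpha+\delta/d(a,b))$-Lipschitz property at $a$ yields a point of $\varphi(b)\cap(y+(\alpha d(a,b)+\delta)\closedball X)$. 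The lemma then upgrades our possibly-boundary witness to an interior witness $z\in\varphi(b)\cap U$ satisfying the strict inequality $\|z-y\|<(\alpha+\varepsilon)d(a,b)$.

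With this strict inequality in hand, the remainder is routine. I would choose $\eta>0$ so that $z+\eta\openball X\subseteq U$ and $\|z-y\|+2\eta\leq(\alpha+\varepsilon)d(a,b)$, invoke lower hemicontinuity of $\varphi$ at $b$ applied to the open set $z+\eta\openball X$ to obtain an open $W\ni b$ with $\varphi(v)\cap(z+\eta\openball X)\neq\emptyset$ for all $v\in W$, and then use continuity of $d(a,\cdot)$ to shrink $W$ to a neighbourhood $V\ni b$ on which $(\alpha+\varepsilon)d(a,v)>\|z-y\|+\eta$. Any selection of $\varphi(v)\cap(z+\eta\openball X)$ then automatically lies in $U$ and, by the triangle inequality, in $y+(\alpha+\varepsilon)d(a,v)\closedball X$, hence in $\psi(v)\cap U$.
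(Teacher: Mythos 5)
Your proposal is correct and follows essentially the same route as the paper's proof: the same split into the cases $b=a$ and $b\neq a$, the same application of Lemma~\ref{lem:convex-intersection-withball-nonempty} (with the scaling $\varepsilon_{0}=\varepsilon d(a,b)$ made explicit) to replace a possible boundary witness by one satisfying the strict inequality $\norm{z-y}<(\alpha+\varepsilon)d(a,b)$, and the same finish via lower hemicontinuity of $\varphi$ together with continuity of $d(a,\cdot)$. The only cosmetic differences are your use of $\varepsilon/2$ in the case $b=a$ (the paper uses $\varepsilon$ itself) and a slightly different bookkeeping of the constants $\eta$, $r$, $s$.
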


\begin{proof}
Let $y\in\varphi(a)$ and $\varepsilon>0$ be arbitrary and let $\psi:M\tocorr X$
be as defined in the statement of the result. That $\psi$ is convex-valued
is immediate. That $\psi$ is non-empty-valued, follows from $\varphi$
being \lowerptlip{(\alpha+\varepsilon)} at $a\in M$\@.

We first show that $\psi$ is lower hemicontinuous at $a$. Let $U\subseteq X$
be an open set satisfying $\psi(a)\cap U\neq\emptyset.$ Since $\psi(a)=\{y\},$
we have $y\in U$. Let $r>0$ be such that $y+r\openball X\subseteq U$.
Since $\varphi$ is lower hemicontinuous, there exists some neighborhood
$V\subseteq M$ of $a$ so that $b\in V$ implies that $\varphi(b)\cap(y+r\openball X)\neq\emptyset$.
Let $0<s<(\alpha+\varepsilon)^{-1}r$ be such that $\openball M(a,s)\subseteq V$.
Fix any $b\in\openball M(a,s)$. Since $\varphi$ is \lowerptlip{(\alpha+\varepsilon)}
at $a\in M$ for every $\varepsilon>0$, the set $\psi(b)$ is non-empty,
and hence there exists some $z\in\psi(b)$. But $z\in\parenth{y+(\alpha+\varepsilon)d(a,b)\closedball X}\subseteq y+r\openball X\subseteq U$.
Therefore $\psi(b)\cap U\neq\emptyset$ for all $b\in\openball M(a,s)$.
We conclude that $\psi$ is lower hemicontinuous at $a$.

Let $c\in M\backslash\{a\}$ be arbitrary. We claim that $\psi$ is
lower hemicontinuous at $c$. Let $U\subseteq X$ be an open set satisfying
$\psi(c)\cap U\neq\emptyset$. Since $\varphi$ is \lowerptlip{(\alpha+\varepsilon)}
at $a\in M$ for every $\varepsilon>0$, by Lemma~\ref{lem:convex-intersection-withball-nonempty},
the set $\varphi(c)\cap\parenth{y+(\alpha+\varepsilon)d(a,c)\openball X}\cap U$
is non-empty. Let
\[
z\in\varphi(c)\cap\parenth{y+(\alpha+\varepsilon)d(a,c)\openball X}\cap U,
\]
 and choose $r>0$ satisfying both $0<r<(\alpha+\varepsilon)d(a,c)-\norm{y-z}$
and
\[
z+r\openball X\subseteq\parenth{y+(\alpha+\varepsilon)d(a,c)\openball X}\cap U.
\]

Since $\varphi$ is lower hemicontinuous, there exists some neighborhood
$V\subseteq M$ of $c$, such that $b\in V$ implies $\varphi(b)\cap(z+r\openball X)\neq\emptyset$.
By definition of $r$, we have $(\alpha+\varepsilon)d(a,c)-\norm{y-z}-r>0$,
hence we choose $s>0$ to satisfy both
\[
0<s<d(a,c)-(\alpha+\varepsilon)^{-1}\norm{z-y}-(\alpha+\varepsilon)^{-1}r
\]
 and $\openball M(c,s)\subseteq V$. Then, for $b\in\openball M(c,s)$,
by the reverse triangle inequality,
\begin{eqnarray*}
(\alpha+\varepsilon)d(a,b) & \geq & (\alpha+\varepsilon)d(a,c)-(\alpha+\varepsilon)d(b,c)\\
 & > & (\alpha+\varepsilon)d(a,c)-(\alpha+\varepsilon)s\\
 & > & (\alpha+\varepsilon)d(a,c)-(\alpha+\varepsilon)d(a,c)+\norm{z-y}+r\\
 & = & \norm{z-y}+r.
\end{eqnarray*}
Hence, for $b\in\openball M(c,s)$ and any $w\in z+r\openball X$,
\begin{eqnarray*}
\norm{w-y} & \leq & \norm{w-z}+\norm{z-y}\\
 & < & r+\norm{z-y}\\
 & < & (\alpha+\varepsilon)d(a,b),
\end{eqnarray*}
so that $(z+r\openball X)\subseteq\parenth{y+(\alpha+\varepsilon)d(a,b)\openball X}$
for all $b\in\openball M(c,s)$. By definition of $V$, for any $b\in\openball M(c,s)\subseteq V$,
there exists some $z'\in(z+r\openball X)\cap\varphi(b)\neq\emptyset$.
But then $z'\in(z+r\openball X)\subseteq U$ and
\[
z'\in(z+r\openball X)\subseteq\parenth{y+(\alpha+\varepsilon)d(a,b)\openball X}\subseteq\parenth{y+(\alpha+\varepsilon)d(a,b)\closedball X},
\]
so that $z'\in\psi(b)\cap U$. Hence $\psi(b)\cap U\neq\emptyset$
for every $b\in\openball M(c,s)$, and therefore $\psi$ is lower
hemicontinuous at $c$.

We finally conclude that $\psi$ is lower hemicontinuous.
\end{proof}
\begin{prop}
\label{prop:lower-(alpha+eps)-Lip-implies-admits-strong-pw-lip-selections}Let
$(M,d)$ be a metric space, $X$ a Banach space and $\alpha\geq0$.
Let $\varphi:M\tocorr X$ be a closed\textendash{} and convex-valued
lower hemicontinuous correspondence that is \lowerptlip{(\alpha+\varepsilon)}
for every $\varepsilon>0$. Then, for every $\varepsilon>0$, the
correspondence $\varphi$ admits strongly pointwise $(\alpha+\varepsilon)$-Lipschitz
selections.
\end{prop}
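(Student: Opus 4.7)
The plan is to fix $\varepsilon>0$, a point $b\in M$, and a value $y\in\varphi(b)$, and construct a continuous selection $f$ of $\varphi$ with $f(b)=y$ that is strongly pointwise $(\alpha+\varepsilon)$-Lipschitz at $b$. The strategy is to intersect $\varphi$ with a ``cone'' of radius $(\alpha+\varepsilon)d(b,\cdot)$ around $y$ to build an auxiliary correspondence whose \emph{every} selection automatically has the desired Lipschitz behaviour at $b$, and then appeal to Michael's Selection Theorem to produce a continuous selection of that auxiliary correspondence.

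Concretely, I would define $\psi:M\tocorr X$ by
\[
\psi(a) := \varphi(a)\cap\parenth{y+(\alpha+\varepsilon)d(b,a)\closedball X}\quad(a\in M).
\]
The key properties of $\psi$ are: it is convex-valued (intersection of convex sets), closed-valued (intersection of a closed set with a closed ball), non-empty-valued (by the \lowerptlip{(\alpha+\varepsilon)} hypothesis at $b$), and lower hemicontinuous. The lower hemicontinuity is exactly the content of Proposition~\ref{prop:lower-pt-lip-implies-lower-hemicontinuous-of-psi}, applied with $a:=b$ and the chosen $y\in\varphi(b)$; this is where essentially all of the work sits, but it is already available to us.

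With $\psi$ in hand, Stone's Theorem (Theorem~\ref{thm:stone's-theorem}) shows $M$ is paracompact, and Michael's Selection Theorem (Theorem~\ref{thm:michael-selection-theorem}) then yields a continuous selection $f:M\to X$ of $\psi$. Since $\psi(a)\subseteq\varphi(a)$ for all $a$, this $f$ is also a continuous selection of $\varphi$. Moreover $f(b)\in\psi(b)=\varphi(b)\cap\{y\}=\{y\}$, so $f(b)=y$, and for every $a\in M$,
\[
\norm{f(b)-f(a)} = \norm{y-f(a)} \leq (\alpha+\varepsilon)d(b,a),
\]
since $f(a)\in y+(\alpha+\varepsilon)d(b,a)\closedball X$. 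This is precisely the definition of $f$ being strongly pointwise $(\alpha+\varepsilon)$-Lipschitz at $b$. As $b\in M$ and $y\in\varphi(b)$ were arbitrary, $\varphi$ admits strongly pointwise $(\alpha+\varepsilon)$-Lipschitz selections, finishing the proof.

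The main obstacle is, strictly speaking, already handled by Proposition~\ref{prop:lower-pt-lip-implies-lower-hemicontinuous-of-psi}; what remains is only to assemble the pieces and verify that Michael's hypotheses (paracompact domain, Banach target, closed and convex non-empty values, lower hemicontinuity) hold for $\psi$. The one small point deserving explicit mention is that the hypothesis ``\lowerptlip{(\alpha+\varepsilon')} for every $\varepsilon'>0$'' is used in two places: once to get $\psi$ non-empty-valued (needing only \lowerptlip{(\alpha+\varepsilon)} at $b$), and again implicitly through the invocation of Proposition~\ref{prop:lower-pt-lip-implies-lower-hemicontinuous-of-psi}, which itself requires the stronger ``for every $\varepsilon'>0$'' form in order to apply Lemma~\ref{lem:convex-intersection-withball-nonempty}.
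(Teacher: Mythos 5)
Your proposal is correct and is essentially identical to the paper's own proof: both define the same auxiliary correspondence $\psi(a)=\varphi(a)\cap\parenth{y+(\alpha+\varepsilon)d(b,a)\closedball X}$, invoke Proposition~\ref{prop:lower-pt-lip-implies-lower-hemicontinuous-of-psi} for its lower hemicontinuity, and then apply Stone's Theorem and Michael's Selection Theorem to obtain the desired selection. Your closing remark about where the ``for every $\varepsilon>0$'' hypothesis is actually used is accurate and matches the role it plays in the paper.
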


\begin{proof}
Let $\varepsilon>0$, $b\in M$ and $y\in\varphi(b)$ be arbitrary.
By Proposition~\ref{prop:lower-pt-lip-implies-lower-hemicontinuous-of-psi},
the correspondence $\psi:M\tocorr X$, defined by
\[
\psi(a):=\varphi(a)\cap(y+(\alpha+\varepsilon)d(b,a)\closedball X)\quad(a\in M),
\]
is lower hemicontinuous, as well as being closed\textendash , convex\textendash ,
and non-empty-valued. Since $M$ is paracompact by Stone's Theorem
(Theorem~\ref{thm:stone's-theorem}), by applying Michael's Selection
Theorem (Theorem~\ref{thm:michael-selection-theorem}), there exists
a continuous selection $f:M\to X$ of $\psi$. By definition of $\psi$,
the function $f$ is also a continuous selection of $\varphi$. Furthermore,
$f$ is strongly pointwise $(\alpha+\varepsilon)$-Lipschitz at $b\in M$
and satisfies $f(b)=y$.

Since $\varepsilon>0$, $b\in M$ and $y\in\varphi(b)$ were chosen
arbitrarily, we conclude that $\varphi$ admits strongly pointwise
$(\alpha+\varepsilon)$-Lipschitz selections for every $\varepsilon>0$.
\end{proof}
Easy applications of Proposition~\ref{prop:lower-(alpha+eps)-Lip-implies-admits-strong-pw-lip-selections}
and our Pointwise Lipschitz Selection Theorem (Theorem~\ref{thm:ptwise-lip-selection-theorem})
yield the following two corollaries. Compared to Theorem~\ref{thm:ptwise-lip-selection-theorem},
these two corollaries give more natural (but less general) sufficient
conditions on a correspondence for the existence of a continuous selection
that is pointwise Lipschitz on a dense set of its domain.
\begin{cor}
\label{cor:natural-ptwise-lip-sel-thm1}Let $(M,d)$ be a metric space,
$X$ a Banach space and $\alpha\geq0$. Let $\varphi:M\tocorr X$
be a closed\textendash{} and convex-valued lower hemicontinuous correspondence
that is \lowerptlip{(\alpha+\varepsilon)} for every $\varepsilon>0$.
Then, for any $\beta>\alpha$, there exists a continuous selection
of $\varphi$ that is pointwise $\beta$-Lipschitz on a dense set
of $M$.

If, additionally, there exists a bounded continuous selection of $\varphi$,
then, for any $\beta>\alpha$, there also exists a bounded continuous
selection of $\varphi$ that is pointwise $\beta$-Lipschitz on a
dense set of $M$.
\end{cor}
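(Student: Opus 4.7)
The plan is to chain together Proposition~\ref{prop:lower-(alpha+eps)-Lip-implies-admits-strong-pw-lip-selections}, Michael's Selection Theorem, and our main result Theorem~\ref{thm:ptwise-lip-selection-theorem}. All of the genuinely analytic work has already been carried out in those results, so the proof of the corollary is essentially a matter of lining up hypotheses.

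Fix $\beta>\alpha$ and choose $\varepsilon>0$ small enough that $\alpha+\varepsilon<\beta$. Applying Proposition~\ref{prop:lower-(alpha+eps)-Lip-implies-admits-strong-pw-lip-selections} with this $\varepsilon$, the correspondence $\varphi$ admits strongly pointwise $(\alpha+\varepsilon)$-Lipschitz selections in the sense of Definition~\ref{def:correspondence-admits-strong-ptwise-lip-selections}; in particular, it admits local strongly pointwise $(\alpha+\varepsilon)$-Lipschitz selections. To apply Theorem~\ref{thm:ptwise-lip-selection-theorem} with constant $\alpha+\varepsilon$ in place of $\alpha$, it remains to produce some continuous selection of $\varphi$. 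This is supplied by Michael's Selection Theorem (Theorem~\ref{thm:michael-selection-theorem}): $M$ is paracompact by Stone's Theorem (Theorem~\ref{thm:stone's-theorem}), and $\varphi$ is non-empty-, closed-, and convex-valued and lower hemicontinuous by hypothesis. Feeding the resulting continuous selection into Theorem~\ref{thm:ptwise-lip-selection-theorem}, and using that $\beta>\alpha+\varepsilon$, one obtains a continuous selection of $\varphi$ that is pointwise $\beta$-Lipschitz on a dense subset of $M$, as required.

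For the second assertion, suppose that $\varphi$ additionally admits a bounded continuous selection. Then the "bounded" clause of Theorem~\ref{thm:ptwise-lip-selection-theorem}, applied with exactly the same $\varepsilon$ and the same local strongly pointwise $(\alpha+\varepsilon)$-Lipschitz selections property, yields a bounded continuous selection of $\varphi$ that is pointwise $\beta$-Lipschitz on a dense subset of $M$. The only subtle step in the whole argument is the initial selection of $\varepsilon>0$ with $\alpha+\varepsilon<\beta$, which converts the pointwise hypothesis "\lowerptlip{(\alpha+\varepsilon)} for every $\varepsilon>0$" into a single Lipschitz constant to which Theorem~\ref{thm:ptwise-lip-selection-theorem} applies; no further technical obstacle arises.
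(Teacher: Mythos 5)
Your argument is correct and is essentially the paper's own proof: Proposition~\ref{prop:lower-(alpha+eps)-Lip-implies-admits-strong-pw-lip-selections} supplies (local) strongly pointwise $(\alpha+\varepsilon)$-Lipschitz selections for an $\varepsilon$ with $\alpha+\varepsilon<\beta$, Michael's Selection Theorem (via Stone's Theorem) supplies an initial continuous selection, and Theorem~\ref{thm:ptwise-lip-selection-theorem} then gives the (bounded, in the second clause) pointwise $\beta$-Lipschitz selection on a dense set. The only cosmetic difference is that non-empty-valuedness is not literally a hypothesis of the corollary but, as the paper notes, follows from the \lowerptlip{(\alpha+\varepsilon)} condition.
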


\begin{proof}
Since $\varphi$ is \lowerptlip{(\alpha+\varepsilon)}, it is also
non-empty valued. By Michael's Selection Theorem (Theorem~\ref{thm:michael-selection-theorem})
$\varphi$ has a continuous selection. Let $\beta>\alpha$ and define
$\varepsilon:=2^{-1}(\beta-\alpha)$. By Proposition~\ref{prop:lower-(alpha+eps)-Lip-implies-admits-strong-pw-lip-selections},
the correspondence $\varphi$ admits strongly pointwise $(\alpha+\varepsilon)$-Lipschitz
selections. We note that $\alpha+\varepsilon<\alpha+2\varepsilon=\beta$.
Then, by Theorem~\ref{thm:ptwise-lip-selection-theorem}, there exists
a continuous selection of $\varphi$ that is pointwise $\beta$-Lipschitz
on a dense set of $M$. Furthermore, if $\varphi$ has a bounded continuous
selection, Theorem~\ref{thm:ptwise-lip-selection-theorem} ensures
the existence of a bounded continuous selection of $\varphi$ that
is pointwise $\beta$-Lipschitz on a dense set of $M$.
\end{proof}
{}
\begin{cor}
\label{cor:natural-ptwise-lip-sel-thm2}Let $(M,d)$ be a \textbf{bounded}
metric space, $X$ a Banach space and $\alpha\geq0$. Let $\varphi:M\tocorr X$
be a closed\textendash{} and convex-valued lower hemicontinuous correspondence
that is \lowerptlip{(\alpha+\varepsilon)} for every $\varepsilon>0$.
Then, for any $\beta>\alpha$, there exists a bounded continuous selection
of $\varphi$ that is pointwise $\beta$-Lipschitz on a dense set
of $M$.
\end{cor}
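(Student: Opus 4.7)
My plan is to reduce Corollary~\ref{cor:natural-ptwise-lip-sel-thm2} to Corollary~\ref{cor:natural-ptwise-lip-sel-thm1} by constructing a bounded continuous selection of $\varphi$ and invoking the ``if additionally'' clause of Corollary~\ref{cor:natural-ptwise-lip-sel-thm1}. Since all the hypotheses of Corollary~\ref{cor:natural-ptwise-lip-sel-thm1} are already assumed in the statement, the only thing missing for its application in its stronger (bounded) form is the existence of a bounded continuous selection, and producing this is where the boundedness of $M$ enters.

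To build such a selection, I would fix any base point $b_{0}\in M$ and, using that $\varphi$ is non-empty-valued (which follows from the lower pointwise Lipschitz hypothesis together with the non-emptiness of $\varphi$ somewhere), pick some $y_{0}\in\varphi(b_{0})$. Then I would consider the cut-down correspondence
\[
\psi(a):=\varphi(a)\cap\parenth{y_{0}+(\alpha+1)d(b_{0},a)\closedball X}\quad(a\in M).
\]
Because $\varphi$ is \lowerptlip{(\alpha+1)} at $b_{0}$, closed- and convex-valued, and lower hemicontinuous, Proposition~\ref{prop:lower-pt-lip-implies-lower-hemicontinuous-of-psi} (applied at $b_{0}$ with $\varepsilon=1$) shows that $\psi$ is non-empty-valued, convex-valued, and lower hemicontinuous; closedness of values is inherited from $\varphi$ since the closed ball is closed.

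Next, since $M$ is a metric space, it is paracompact by Stone's Theorem (Theorem~\ref{thm:stone's-theorem}), so Michael's Selection Theorem (Theorem~\ref{thm:michael-selection-theorem}) yields a continuous selection $f:M\to X$ of $\psi$. By the definition of $\psi$, this $f$ is in particular a continuous selection of $\varphi$, and it satisfies $\norm{f(a)-y_{0}}\leq(\alpha+1)d(b_{0},a)$ for every $a\in M$. Here is where I use that $(M,d)$ is bounded: writing $D:=\sup_{a\in M}d(b_{0},a)<\infty$, we get $\norm{f(a)-y_{0}}\leq(\alpha+1)D$ for all $a\in M$, so $f$ is bounded.

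Having produced this bounded continuous selection, I would then invoke the second clause of Corollary~\ref{cor:natural-ptwise-lip-sel-thm1} with the same $\alpha$ and any given $\beta>\alpha$ to obtain a bounded continuous selection of $\varphi$ that is pointwise $\beta$-Lipschitz on a dense set of $M$, which is exactly the conclusion we want. The main ``work'' is the construction of the bounded selection via $\psi$; this is not hard, but does require the full strength of Proposition~\ref{prop:lower-pt-lip-implies-lower-hemicontinuous-of-psi} together with the diameter bound furnished by the boundedness hypothesis, so I expect no real obstacle once the reduction is set up correctly.
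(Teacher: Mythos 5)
Your proposal is correct and follows essentially the same route as the paper: the paper cites Proposition~\ref{prop:lower-(alpha+eps)-Lip-implies-admits-strong-pw-lip-selections} to obtain a strongly pointwise $(\alpha+\varepsilon)$-Lipschitz selection (your cut-down correspondence $\psi$ together with Stone's and Michael's theorems is exactly the proof of that proposition), observes that boundedness of $M$ makes this selection bounded, and then invokes the bounded clause of Corollary~\ref{cor:natural-ptwise-lip-sel-thm1}, just as you do. The only difference is that you unpack the proposition inline rather than citing it.
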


\begin{proof}
Let $\varepsilon>0$ be arbitrary. By Proposition~\ref{prop:lower-(alpha+eps)-Lip-implies-admits-strong-pw-lip-selections},
the correspondence $\varphi$ admits strongly pointwise $(\alpha+\varepsilon)$-Lipschitz
selections. I.e., for any $b\in M$ and $y\in\varphi(b)$, there exists
a continuous selection $f$ of $\varphi$ that is strongly pointwise
$(\alpha+\varepsilon)$-Lipschitz selection at $b$. Since $M$ is
bounded, this selection $f$ is a bounded continuous selection of
$\varphi.$ Applying Corollary~\ref{cor:natural-ptwise-lip-sel-thm1}
yields the result.
\end{proof}

\section{Application: An improved Bartle-Graves Theorem\label{sec:Applications}}

We recall the following version of the classical Bartle-Graves Theorem:
\begin{thm}[{Classical Bartle-Graves Theorem \cite[Corollary~17.67]{AliprantisBorder}}]
\label{thm:classical-Bartle-Graves}Let $X$ and $Y$ be Banach spaces.
Every continuous linear surjection $T:X\to Y$ has a continuous (and
positively homogeneous) right inverse $\tau:Y\to X$ (i.e., $T\circ\tau=\text{id}_{Y}$).
\end{thm}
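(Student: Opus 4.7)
The plan is to apply Michael's Selection Theorem (Theorem~\ref{thm:michael-selection-theorem}) to a suitably bounded version of the inverse image correspondence $\varphi:Y\tocorr X$ defined by $\varphi(y):=T^{-1}\{y\}$, and then to homogenize the continuous selection thus obtained.

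First I would verify the hypotheses of Michael's Theorem for $\varphi$. Non-emptiness comes from surjectivity of $T$; each fibre $T^{-1}\{y\}$ is a translate of the closed subspace $\ker T$, hence closed and convex. The Open Mapping Theorem provides a constant $\alpha>0$ such that for every $y,z\in Y$ and every $x_0\in T^{-1}\{y\}$ there exists $x\in T^{-1}\{z\}$ with $\norm{x-x_0}\leq\alpha\norm{y-z}$; from this, lower hemicontinuity of $\varphi$ is immediate (given $x_0\in\varphi(y)\cap U$ with $x_0+s\openball X\subseteq U$, the neighborhood $y+(s/\alpha)\openball Y$ witnesses LHC at $y$). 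Since $Y$ is paracompact by Stone's Theorem (Theorem~\ref{thm:stone's-theorem}), Michael's Theorem produces a continuous selection $\sigma:Y\to X$, which is by definition a continuous right inverse of $T$.

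The remaining task is to make the selection positively homogeneous. I would use the standard trick of setting $\tau(0):=0$ and $\tau(y):=\norm y\sigma(y/\norm y)$ for $y\neq0$; this is automatically positively homogeneous, and by linearity of $T$ still satisfies $T\circ\tau=\textup{id}_{Y}$. Continuity away from $0$ is immediate from continuity of $\sigma$. The delicate point is continuity at $0$, which requires $\sigma$ to be bounded on $\sphere Y$ --- something a generic Michael selection need not satisfy in an infinite dimensional setting, since $\sphere Y$ is non-compact. The fix is to apply Michael's Theorem instead to the bounded subcorrespondence $\psi(y):=T^{-1}\{y\}\cap(\alpha+1)\norm y\closedball X$. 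The Open Mapping estimate implies $\psi$ is non-empty-valued, it is clearly closed- and convex-valued, and a variant of the argument above shows it is lower hemicontinuous (with a separate verification at $y=0$, where $\psi(0)=\{0\}$ and $\psi(z)\subseteq(\alpha+1)\norm z\closedball X$ collapses into any neighborhood of $0$ as $z\to0$). A Michael selection $\sigma$ of $\psi$ then satisfies $\norm{\sigma(y)}\leq(\alpha+1)\norm y$, so $\norm{\tau(y)}\leq(\alpha+1)\norm y\to0$ as $y\to 0$, yielding continuity at the origin.

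The chief obstacle is precisely this norm control at $0$: without a linearly bounded selection, homogenization destroys continuity at the origin. Once the correct bounded subcorrespondence is identified, the proof reduces to routine verifications of the Michael hypotheses plus a short homogenization argument.
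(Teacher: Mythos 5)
Your proposal is correct and follows essentially the approach the paper itself endorses for Theorem~\ref{thm:classical-Bartle-Graves}: the result is quoted from \cite{AliprantisBorder}, and the standard modern proof indicated there is exactly yours, namely Michael's Selection Theorem (Theorem~\ref{thm:michael-selection-theorem}) applied to a linearly bounded subcorrespondence of $y\mapsto T^{-1}\{y\}$, followed by positive homogenization (the same device the paper uses on $\sphere Y$ in proving Theorem~\ref{thm:improved-Bartle-Graves}). The one step you state without detail, lower hemicontinuity of $\psi(y):=T^{-1}\{y\}\cap(\alpha+1)\norm y\closedball X$, does need the convexity trick of sliding a boundary point of the ball toward a point of $T^{-1}\{y\}$ of norm at most $\alpha\norm y$ (compare Lemma~\ref{lem:convex-intersection-withball-nonempty} and Proposition~\ref{prop:lower-pt-lip-implies-lower-hemicontinuous-of-psi}), and it is precisely the slack between $\alpha$ and $\alpha+1$ that makes this work, so your argument is sound.
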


In this section we leverage our Pointwise Lipschitz Selection Theorem
(Theorem~\ref{thm:ptwise-lip-selection-theorem}) to establish a
slight improvement of the classical Bartle-Graves Theorem. Since the
case where $Y$ is finite dimensional is trivial (because the kernel
of $T$ is then complemented), we restrict ourselves to the infinite
dimensional case. In Theorem~\ref{thm:improved-Bartle-Graves} we
show that the map $\tau$ in the above theorem can, in fact, be chosen
to be pointwise Lipschitz on a dense set of $Y$. The construction
of this dense set, through application of Theorem~\ref{thm:ptwise-lip-selection-theorem},
yields a meager set.

The proof of Theorem~\ref{thm:improved-Bartle-Graves} is essentially
a straightforward verification of the lower pointwise Lipschitz-ness
of the inverse image correspondence (restricted to the unit sphere
of the codomain). This allows us to apply Corollary~\ref{cor:natural-ptwise-lip-sel-thm2}
to complete the proof.
\begin{thm}[Improved Bartle-Graves Theorem]
\label{thm:improved-Bartle-Graves}Let $X$ and $Y$ be infinite
dimensional Banach spaces and $T:X\to Y$ a continuous linear surjection.
Then there exist a constant $\eta>0$ and a positively homogeneous
continuous right inverse $\tau:Y\to X$ of $T$ that is pointwise
$\eta$-Lipschitz on a dense \textbf{meager} set of $Y$.
\end{thm}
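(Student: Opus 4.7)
My plan is to apply Corollary~\ref{cor:natural-ptwise-lip-sel-thm2} to the inverse image correspondence of $T$ restricted to the bounded metric space $\sphere Y$, and then extend the resulting bounded selection to all of $Y$ by positive homogeneity via Lemma~\ref{lem:homogeneous-exstension-preserves-pw-lip}. By the Open Mapping Theorem, there is some $\alpha>0$ such that every $y'\in Y$ admits a preimage $z\in X$ with $Tz=y'$ and $\norm z\leq\alpha\norm{y'}$. Defining $\varphi:\sphere Y\tocorr X$ by $\varphi(y):=T^{-1}\{y\}$, the choice $x':=x+z$ (with $z$ chosen to satisfy $Tz=y'-y$ and $\norm z\leq\alpha\norm{y'-y}$) shows that for any $y,y'\in\sphere Y$ and any $x\in\varphi(y)$, there exists $x'\in\varphi(y')$ with $\norm{x-x'}\leq\alpha\norm{y-y'}$. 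Thus $\varphi$ is \lowerptlip{\alpha} (and hence \lowerptlip{(\alpha+\varepsilon)} for every $\varepsilon>0$); essentially the same shift argument yields lower hemicontinuity of $\varphi$. Each $\varphi(y)$ is a translate of $\ker T$ and is therefore closed and convex.

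\medskip

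Corollary~\ref{cor:natural-ptwise-lip-sel-thm2}, applied with any $\beta_{0}>\alpha$, now produces a bounded continuous selection $\underline{\rho}:\sphere Y\to X$ of $\varphi$ that is pointwise $\beta_{0}$-Lipschitz on a dense subset $B\subseteq\sphere Y$. Inspecting the proofs of Theorem~\ref{thm:ptwise-lip-selection-theorem} and Lemma~\ref{lem:existence-of-successively-more-ptwise-lip-sequence}, this dense set has the explicit form $B=\bigcup_{n\in\N}B_{n}$, where each $B_{n}$ is a maximal $2^{-(n-1)}$-separation in $\sphere Y$. Let $\tau:Y\to X$ be the positively homogeneous extension of $\underline{\rho}$ introduced in Lemma~\ref{lem:homogeneous-exstension-preserves-pw-lip}. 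Then $\tau$ is continuous and positively homogeneous, and since $T\underline{\rho}(u)=u$ for every $u\in\sphere Y$, linearity of $T$ gives $T\tau(y)=y$ for every $y\in Y$. By the same lemma, $\tau$ is pointwise $\eta$-Lipschitz on the set $D:=\set{\lambda b\in Y}{b\in B,\ \lambda>0}$, where $\eta:=2\beta_{0}+\norm{\underline{\rho}}_{\infty}$.

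\medskip

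The main obstacle that remains is to show that $D$ is simultaneously dense and meager in $Y$. Density of $D$ in $Y$ is immediate from the density of $B$ in $\sphere Y$ together with $0\in\overline D$. For meagerness, each $B_{n}$ is closed in $\sphere Y$ since any Cauchy sequence in a $2^{-(n-1)}$-separation is eventually constant, and $B_{n}$ has empty interior in $\sphere Y$: since $Y$ is infinite dimensional, $\sphere Y$ has no isolated points, so every non-empty open subset of $\sphere Y$ contains pairs of distinct points at distance strictly less than $2^{-(n-1)}$, contradicting the separation property of $B_{n}$. The radial projection $\pi:Y\setminus\{0\}\to\sphere Y$, $\pi(y):=y/\norm y$, is both continuous and open, hence $D_{n}:=\pi^{-1}(B_{n})$ is closed in $Y\setminus\{0\}$, and an interior point of $D_{n}$ in $Y$ would, via $\pi$, produce an interior point of $B_{n}$ in $\sphere Y$, which is impossible. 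An open ball about $0$ contained in $D_{n}\cup\{0\}$ is likewise ruled out, since removing $0$ would then give an open subset of $D_{n}$. Therefore each $D_{n}\cup\{0\}$ is nowhere dense in $Y$, and $D\subseteq\bigcup_{n\in\N}(D_{n}\cup\{0\})$ is meager in $Y$, completing the argument.
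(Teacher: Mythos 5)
Your proposal is correct and follows essentially the same route as the paper: restrict the inverse image correspondence to $\sphere Y$, verify lower hemicontinuity and lower pointwise $(\alpha+\varepsilon)$-Lipschitzness via the Open Mapping Theorem, apply Corollary~\ref{cor:natural-ptwise-lip-sel-thm2} on the bounded metric space $\sphere Y$, extend positively homogeneously via Lemma~\ref{lem:homogeneous-exstension-preserves-pw-lip}, and obtain meagerness from the fact that the dense set produced in Theorem~\ref{thm:ptwise-lip-selection-theorem} is a countable union of $2^{-(n-1)}$-separations, each nowhere dense in the non-discrete sphere. Your radial-projection argument merely spells out the step the paper dismisses as straightforward, so no substantive difference remains.
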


\begin{proof}
By the Open Mapping Theorem, there exists some $\gamma>0$ such that
$\gamma\openball Y\subseteq T(\openball X)$. We define the correspondence
$\varphi:\sphere Y\tocorr X$ by $\varphi(y):=T^{-1}\{y\}$ for $y\in\sphere Y$.
It is clear that $\varphi$ is non-empty\textendash , closed\textendash ,
and convex\textendash valued.

We claim that $\varphi$ is lower hemicontinuous. Let $y\in\sphere Y$
be arbitrary and $U\subseteq X$ an open set satisfying $\varphi(y)\cap U\neq\emptyset$.
Let $x\in\varphi(y)\cap U$ be arbitrary and let $r>0$ be such that
$x+r\openball X\subseteq U$. Let $z\in\sphere Y\cap(y+r\gamma\openball Y)$
be arbitrary. Define $z':=z-y$ so that $z'\in r\gamma\openball Y$.
Then there exists some $x'\in r\openball X$ such that $Tx'=z'$,
and hence $T(x'+x)=z'+y=z-y+y=z$. Therefore $x+x'\in\varphi(z)\cap(x+r\openball X)$,
so that, for any $z\in\sphere Y\cap(y+r\gamma\openball Y)$, we have
$\varphi(z)\cap U\neq\emptyset.$ We conclude that $\varphi$ is lower
hemicontinuous.

Let $\varepsilon>0$ be arbitrary and set $\alpha:=\gamma^{-1}$.
We claim that $\varphi$ is lower pointwise $(\alpha+\varepsilon)$-Lipschitz.
Let $y\in\sphere Y$ and $x\in\varphi(y)$ be arbitrary. For any $z\in\sphere Y$,
define $z':=z-y$. Then $z'\in(1+\varepsilon\gamma)\norm{z-y}\openball Y$.
Let $x'\in\gamma^{-1}(1+\varepsilon\gamma)\norm{z-y}\openball X$
be such that $Tx'=z'$. Then $T(x+x')=y+z'=y+z-y=z,$ so that $x+x'\in\varphi(z)$.
But $x'\in(\alpha+\varepsilon)\norm{z-y}\openball X\subseteq(\alpha+\varepsilon)\norm{z-y}\closedball X$.
Hence $\varphi(z)\cap(x+(\alpha+\varepsilon)\norm{z-y}\closedball X)\neq\emptyset$,
and we conclude that $\varphi$ is lower pointwise $(\alpha+\varepsilon)$-Lipschitz
for every $\varepsilon>0$.

Let $\beta>\alpha$. By Corollary~\ref{cor:natural-ptwise-lip-sel-thm2},
there exists a selection $\underline{\tau}\in C(\sphere Y,X)$ of
the correspondence $\varphi$ that is pointwise $\beta$-Lipschitz
on a dense set of $\sphere Y$. We denote this dense set by $B\subseteq\sphere Y$,
which, by construction is meager (see the proof of Theorem~\ref{thm:ptwise-lip-selection-theorem}
where $B$ is constructed as $\bigcup_{n\in\N}B_{n}$, with $B_{n}$
being a $2^{-(n-1)}$-separation for each $n\in\N$. Since $\sphere Y$
was assumed to not be discrete, the set $B_{n}$ is nowhere dense
in $\sphere Y$ for each $n\in\N$).

It is straightforward to see that $B':=\set{\lambda b}{\lambda>0,\ b\in B}$
is dense and meager in $Y$. By Lemma~\ref{lem:homogeneous-exstension-preserves-pw-lip},
the positively homogeneous extension $\tau:Y\to X$ of $\underline{\tau}$
is pointwise $(2\beta+\norm{\underline{\tau}}_{\infty})$-Lipschitz
on $B'$. Setting $\eta:=2\beta+\norm{\underline{\tau}}_{\infty}$
and noting that $\tau$ is a right inverse of $T$ completes the proof.
\end{proof}

\ifx\ams\undefined\else\vfill\fi

\section{\label{sec:aharoni-lindenstrauss-example}An example of Aharoni and
Lindenstrauss}

The following example, devised by Aharoni and Lindenstrauss \cite{LindenstraussAharoni},
shows that continuous linear surjections between Banach spaces need
not have Lipschitz or even uniformly continuous right inverses in
general.
\begin{example}
\label{exa:cadlag-quotient}Let $D$ denote the space of all bounded
real-valued functions on $[0,1]$ that are right continuous at every
point of $[0,1]$, whose left limit exists at every point of $[0,1]$,
and with discontinuities only occurring at rational numbers. We endow
$D$ with the uniform norm $\norm{\cdot}_{\infty}$. Let $C([0,1])\subseteq D$
denote the closed subspace of all continuous real-valued functions
on $[0,1]$. The quotient map $q:D\to D/C([0,1])$ has no Lipschitz
(even uniformly continuous) right inverse. We refer the reader to
\cite{LindenstraussAharoni} or \cite[Example~1.20]{LindenstraussBenyamini}
for proof of this fact.
\end{example}

Our improved Bartle-Graves Theorem (Theorem~\ref{thm:improved-Bartle-Graves})
shows that the quotient map $q$ has a continuous positively homogeneous
right inverse that is pointwise $\eta$-Lipschitz for some $\eta>0$
on a dense meager set of $D/C([0,1])$. That the quotient map $q$
does not admit a Lipschitz right inverse shows that our Pointwise
Lipschitz Selection Theorem (Theorem~\ref{thm:ptwise-lip-selection-theorem})
cannot be improved to a general result which may yield a selection
that is pointwise Lipschitz on the \emph{whole }of its domain:

We first quote the following result by Sch\"affer \cite[Theorem~3.6]{Schaffer}:
\begin{thm}
\label{thm:unit-shperes-are-quasi-convex}The unit sphere of every
normed space is bi-Lipschitz homeomorphic to a length space (cf. \cite[Definition~2.1.6]{BuragoBuragoIvanov}).
\end{thm}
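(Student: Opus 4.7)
The plan is to equip $\sphere X$ with its \emph{intrinsic metric}
\[
d_{I}(x,y) := \inf\set{\ell(\gamma)}{\gamma:[0,1]\to\sphere X\text{ rectifiable},\ \gamma(0)=x,\ \gamma(1)=y},
\]
where $\ell(\gamma)$ is the length measured using the norm of $X$, and then to prove that the identity map $\iota:(\sphere X,\norm{\cdot})\to(\sphere X,d_{I})$ is a bi-Lipschitz homeomorphism. Once the bi-Lipschitz equivalence is established, a standard verification shows that $(\sphere X,d_{I})$ is a length space (its intrinsic metric coincides with $d_{I}$ itself), so $\sphere X$ is bi-Lipschitz homeomorphic to the length space $(\sphere X,d_{I})$. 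Throughout I assume $\dim X\geq2$; in this case $\sphere X$ is path-connected, via any two-dimensional subspace.

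One direction of the bi-Lipschitz bound is immediate: any rectifiable curve $\gamma$ in $\sphere X$ from $x$ to $y$ satisfies $\ell(\gamma)\geq\norm{y-x}$, so $\norm{x-y}\leq d_{I}(x,y)$. For the reverse inequality $d_{I}(x,y)\leq C\norm{x-y}$, I would first treat \emph{short chords}: for $x,y\in\sphere X$ with $\norm{x-y}\leq 1/2$, the segment $u(t):=(1-t)x+ty$ satisfies $\norm{u(t)}\geq1-t\norm{y-x}\geq 1/2$ on $[0,1]$, so its radial projection $\gamma(t):=u(t)/\norm{u(t)}$ is a well-defined curve in $\sphere X$ from $x$ to $y$. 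The elementary estimate
\[
\norm{\frac{u}{\norm u}-\frac{v}{\norm v}}\leq\frac{2\norm{u-v}}{\min\parenth{\norm u,\norm v}},
\]
exactly as exploited in the proof of Lemma~\ref{lem:homogeneous-exstension-preserves-pw-lip}, bounds $\ell(\gamma)\leq 4\norm{x-y}$.

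For \emph{long chords} $\norm{x-y}>1/2$, including the antipodal case $y=-x$ where the segment through the origin makes the radial projection undefined, I would work in a $2$-dimensional subspace of $X$ containing $x$ and $y$; its unit sphere is a Jordan curve, in which I would insert an intermediate point $z\in\sphere X$ (for instance by normalizing a suitable non-convex combination of $x$ and $y$) with $\norm{x-z},\norm{z-y}\leq 1/2$, and concatenate the two resulting short radial arcs. This produces a curve in $\sphere X$ from $x$ to $y$ of length at most $4\norm{x-z}+4\norm{z-y}\leq 8$, which, since $\norm{x-y}\geq 1/2$ in this regime, is at most $16\norm{x-y}$. The main obstacle will be executing the intermediate-point construction uniformly across all normed spaces (the geometric content of the result), without inadvertently invoking any additional regularity of the norm such as uniform convexity or smoothness; once this is secured, the bi-Lipschitz constants are uniform, and the theorem follows.
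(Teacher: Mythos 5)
The paper itself contains no proof of Theorem~\ref{thm:unit-shperes-are-quasi-convex}: it is quoted from Sch\"affer \cite[Theorem~3.6]{Schaffer} (proved independently in \cite[Theorem~3.6]{intrinsic-metric}), so your proposal is an attempt to reprove a cited result rather than something that can be matched against an in-paper argument. Your overall strategy --- pass to the intrinsic metric $d_I$ on $\sphere X$ and show that the identity map is bi-Lipschitz --- is indeed the standard route, and your short-chord step is correct: for $\norm{x-y}\leq 1/2$ the segment from $x$ to $y$ stays at norm at least $1/2$ from the origin, and the normalization estimate (the same one used in the proof of Lemma~\ref{lem:homogeneous-exstension-preserves-pw-lip}) shows its radial projection is a curve in $\sphere X$ of length at most $4\norm{x-y}$.

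The long-chord step, however, has a genuine gap, and it sits exactly where the substance of the theorem lies. As written, you insert a \emph{single} intermediate point $z\in\sphere X$ with $\norm{x-z}\leq 1/2$ and $\norm{z-y}\leq 1/2$; the triangle inequality forces $\norm{x-y}\leq 1$ for such a $z$ to exist, so the construction is impossible whenever $\norm{x-y}>1$ --- in particular in the antipodal case $y=-x$, $\norm{x-y}=2$, the very case the detour was designed to handle. What is actually needed is a chain $x=z_0,z_1,\ldots,z_k=y$ on the unit circle of a two-dimensional subspace containing $x$ and $y$, with $\norm{z_{i-1}-z_i}\leq 1/2$ for each $i$ and with $k$ bounded \emph{uniformly over all two-dimensional normed spaces}; only then does your short-chord estimate give $d_I(x,y)\leq 2k\leq C\norm{x-y}$ in the regime $\norm{x-y}>1/2$. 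Producing such a uniform chain (equivalently, a uniform bound on the self-circumference of planar unit circles, e.g.\ Go\l\k{a}b's theorem that it lies between $6$ and $8$, or a reduction to the Euclidean circle via John's theorem, or Sch\"affer's girth estimates) is precisely the geometric content of the theorem, and it is the part you explicitly defer as ``the main obstacle''; without it there is no bi-Lipschitz constant. Once such a uniform bound is supplied, the rest of your outline (the chord lower bound $\norm{x-y}\leq d_I(x,y)$, finiteness of $d_I$, and the standard verification that $(\sphere X,d_I)$ is a length space) does go through; note also that your standing assumption $\dim X\geq 2$ is genuinely needed, since for $\dim X=1$ the sphere is a two-point discrete space.
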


Next, a straightforward adaptation of a result due to Durand-Cartagena
and Jaramillo \cite[Corollary~2.4]{Durand-CartagenaJaramillo} yields
the following result:
\begin{thm}
\label{thm:durant-cartagena-jaramillo}Let $X$ be a normed space.
If a metric space $(M,d)$ is bi-Lipschitz homeomorphic to a length
space, then every function $f:M\to X$ that is pointwise $\alpha$-Lipschitz
for some $\alpha\geq0$ on the whole of $M$ is, in fact, Lipschitz.
\end{thm}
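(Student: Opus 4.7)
The plan is to reduce to the case where $M$ is itself a length space, and then to run a standard ``continuation along a path'' argument. For the reduction, given a bi-Lipschitz homeomorphism $\phi : M \to N$ from $M$ onto a length space $N$, a direct check will show that pointwise Lipschitz-ness transfers under bi-Lipschitz maps up to a multiplicative constant: if $L \geq 1$ is a bi-Lipschitz constant for $\phi$, then $g := f \circ \phi^{-1} : N \to X$ is pointwise $(L\alpha)$-Lipschitz on all of $N$. Hence it would suffice to prove the theorem when $M$ itself is a length space, since applying that case to $g$ and composing with $\phi$ then yields that $f$ is Lipschitz.

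Assuming $M$ itself is a length space, I would fix arbitrary $a, b \in M$ and $\varepsilon > 0$, and select a rectifiable path $\gamma : [0, L] \to M$ parameterized by arc length, with $\gamma(0) = a$, $\gamma(L) = b$, and $L \leq d(a,b) + \varepsilon$; in particular $d(\gamma(s), \gamma(t)) \leq |s - t|$ for all $s, t \in [0, L]$. Since pointwise Lipschitz-ness of $f$ trivially implies continuity, the function $\Phi(t) := \norm{f(\gamma(0)) - f(\gamma(t))}$ is continuous on $[0, L]$.

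The core of the argument is to show that the set $E := \{t \in [0, L] \mid \Phi(t) \leq (\alpha + \varepsilon) t\}$ coincides with $[0, L]$. Clearly $0 \in E$, and $E$ is closed by continuity of $\Phi$. Setting $t^* := \sup E$ and assuming for contradiction that $t^* < L$, the pointwise $\alpha$-Lipschitz hypothesis at $\gamma(t^*)$ furnishes some $\delta > 0$ so that $\norm{f(x) - f(\gamma(t^*))} \leq (\alpha + \varepsilon) r$ whenever $x \in \closedball{M}(\gamma(t^*), r)$ and $0 < r < \delta$. For any $s \in (t^*, L]$ with $s - t^* < \delta$, the arc-length bound gives $d(\gamma(s), \gamma(t^*)) \leq s - t^* < \delta$, which combined with the triangle inequality produces $\Phi(s) \leq (\alpha + \varepsilon) t^* + (\alpha + \varepsilon)(s - t^*) = (\alpha + \varepsilon) s$, contradicting the maximality of $t^*$. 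Hence $t^* = L$, so $\norm{f(a) - f(b)} \leq (\alpha + \varepsilon)(d(a,b) + \varepsilon)$, and letting $\varepsilon \to 0^+$ yields $\norm{f(a) - f(b)} \leq \alpha \, d(a,b)$.

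The main points that require care are the bookkeeping in the reduction step to obtain the correct pointwise constant for $g$, and carefully matching the $\limsup$-formulation of pointwise Lipschitz-ness (a closed-ball supremum divided by the radius) to the usable local estimate invoked in the maximality step at $\gamma(t^*)$. Beyond this, the essential conceptual input is the length-space hypothesis, which is precisely what allows the parameterization by arc length to deliver the unit-speed bound $d(\gamma(s), \gamma(t)) \leq |s - t|$ that drives the whole argument.
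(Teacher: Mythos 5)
Your argument is correct. Note, though, that the paper does not prove this theorem at all: it simply invokes ``a straightforward adaptation'' of Durand-Cartagena and Jaramillo's Corollary~2.4 (which is stated for real-valued functions on quasi-convex/length spaces). What you have written is essentially that adaptation carried out in full: the bi-Lipschitz transfer step (showing $g:=f\circ\phi^{-1}$ is pointwise $(L\alpha)$-Lipschitz, at the cost of ending with the Lipschitz constant $L^{2}\alpha$ for $f$, which is harmless since only Lipschitz-ness is asserted), followed by the standard continuation argument along an almost-shortest arc-length-parameterized path, using closedness of $E=\{t:\Phi(t)\le(\alpha+\varepsilon)t\}$ so that $t^{*}=\sup E$ lies in $E$ and the pointwise estimate at $\gamma(t^{*})$ pushes membership past $t^{*}$. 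The only points to make explicit in a final write-up are (i) that $t^{*}\in E$ (you use $\Phi(t^{*})\le(\alpha+\varepsilon)t^{*}$ in the triangle inequality, which follows from closedness of $E$, so say so), and (ii) the translation from the $\limsup$/closed-ball definition of pointwise Lipschitz-ness to the local estimate ``$\norm{f(x)-f(\gamma(t^{*}))}\le(\alpha+\varepsilon)r$ for all $x\in\closedball M(\gamma(t^{*}),r)$, $0<r<\delta$,'' which you flag and which is immediate from the definition. With those details spelled out, your proof is a complete, self-contained vector-valued version of the cited result, and in that sense it goes beyond what the paper itself records.
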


Returning to Example~\ref{exa:cadlag-quotient}, with the correspondence
$\varphi:\sphere{D/C([0,1])}\tocorr D$ defined by $\varphi(x):=q^{-1}(\{x\})$
for all $x\in\sphere{D/C([0,1])}$, should there exist a selection
of $\varphi$ that is pointwise $\alpha$-Lipschitz on the whole of
$\sphere{D/C([0,1])}$, we could be able to conclude that such a selection
is  Lipschitz by Theorems~\ref{thm:unit-shperes-are-quasi-convex}
and~\ref{thm:durant-cartagena-jaramillo}. The positively homogeneous
extension $f$ would then be a Lipschitz right inverse of $q$, contradicting
Aharoni and Lindenstrauss' observation that no such map exists.

\begin{acknowledgement*}
	The author would like to thank the MathOverflow community (Nate Eldredge
in particular, for pointing out the example in Remark~\ref{rem:cantor-ternary}
to the author), and the anonymous referees of the paper for their
constructive comments and suggestions.

\end{acknowledgement*}

\bibliographystyle{amsplain}
\bibliography{bib}

\end{document}